\newtheorem{theorem}{Theorem}[section]
\newtheorem{lemma}[theorem]{Lemma}
 \theoremstyle{definition}
\theoremstyle{remark}
\newtheorem{remark}[theorem]{Remark}
\newcommand{\ul}{\underline}
\numberwithin{equation}{section}
\begin{document}

\title[Degenerate Hessian equations]
{The Dirichlet problem for a class of Hessian type equations}

\author{Heming Jiao}
\address{Department of Mathematics, Harbin Institute of Technology,
         Harbin, 150001, China}
\email{jiao@hit.edu.cn}
\author{Tingting Wang}
\address{Department of Mathematics, Harbin Institute of Technology,
         Harbin, 150001, China\\
         \emph{Present Address:}
         Jiuquan Satellite Launch Center, Jiuquan, 735000, China}
\email{ttwanghit@gmail.com}

\begin{abstract}
We are concerned with the Dirichlet problem for a class of
Hessian type equations. Applying some new methods we are able to establish the
$C^2$ estimates for an approximating problem under essentially optimal
structure conditions. Based on these estimates, the existence of
classical solutions is proved.

{\em Keywords:} Hessian equations, interior second
order estimates, classical solutions.

\end{abstract}

\maketitle

\section{Introduction}

Let $\Omega$ be a bounded domain in $\mathbb{R}^{n}$ ($n\geq 2$) with smooth boundary $\partial \Omega$.
In this paper, we are concerned with the regularity for solutions of the Dirichlet problem
\begin{equation}
\label{1-1}
\left\{ \begin{aligned}
    f \big(\lambda[D^{2} u + \gamma \triangle u I]\big) & = \psi  \;\;\mbox{ in }~ \Omega, \\
                 u &= \varphi  \;\;\mbox{ on }~ \partial \Omega,
\end{aligned} \right.
\end{equation}
where
$\gamma \geq 0$ is a constant, $I$ is the unit matrix and $\lambda[D^{2} u + \gamma \triangle u I]=(\lambda_{1},...,\lambda_{n})$
denote the eigenvalues of the matrix $\{D^{2} u + \gamma \triangle u I\}$.

Following \cite{CNS3}, $f \in C^2 (\Gamma) \cap C (\bar{\Gamma})$ is assumed to be
defined in an open convex symmetric cone $\Gamma$,
with vertex at the origin and
\[
\Gamma \supseteq \Gamma_{n} \equiv \big\{\lambda \in \mathbb{R}^{n}:
\mbox { each component} ~\lambda_{i} > 0 \big\},
\]
and to satisfy the following structure conditions:
\begin{equation}
\label{1-2}
f_{i} \equiv \frac{\partial f}{\partial \lambda_{i}} > 0 \mbox{ in } \Gamma, 1 \leq i \leq n,
\end{equation}
\begin{equation}
\label{1-3}
f \mbox{ is concave in } \Gamma,
\end{equation}
and
\begin{equation}
\label{1-4}
f > 0 \mbox{ in } \Gamma, ~~ f = 0 \mbox{ on } \partial \Gamma.
\end{equation}

A function $u\in C^{2}(\Omega)$ is called {\it admissible} if
$\lambda[D^{2}u+\gamma\Delta u I]\in \bar{\Gamma}$.
According to \cite{CNS3}, condition (\ref{1-2}) ensures that equation
(\ref{1-1}) is degenerate elliptic for admissible solutions.
While (\ref{1-3}) implies that the function $F$
defined by $F[A]=f(\lambda [A])$ to be concave for $A \in \mathcal{S}^{n\times n}$ with $\lambda[A]\in \Gamma$, where
$\mathcal{S}^{n\times n}$ is the set of $n$ by $n$ symmetric
matrices.

We assume that $\psi \geq 0$ in $\Omega$, so the equation
\eqref{1-1} is degenerate when $\gamma = 0$. In this paper, there are no geometric
restrictions to $\partial \Omega$ being made.
Instead, we assume that there exists a subsolution
$\underline u \in C^2 (\bar{\Omega})$ satisfying
$\lambda (D^2 \underline{u}+ \gamma \triangle \underline{u} I) \in \Gamma$
on $\bar{\Omega}$ and
\begin{equation}
\label{1-5}
\left\{
\begin{aligned}
f (\lambda (D^{2}\underline{u}+\gamma \triangle \underline{u} I))&\geq\psi  \;\;{\mathrm{in}~ \Omega,} \\
\underline{u} &=\varphi \;\;\mathrm{on}~ \partial \Omega.
\end{aligned}
\right.
\end{equation}

\begin{theorem}
\label{jw-th1}
Let $\gamma > 0$, $\psi \in C^\infty (\bar{\Omega})$ and $\varphi \in C^\infty (\partial \Omega)$.
Under \eqref{1-2}-\eqref{1-5}, there exists a unique admissible solution
$u \in C^{\infty} (\bar{\Omega})$ of \eqref{1-1}.
\end{theorem}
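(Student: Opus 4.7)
\emph{Proof plan.} The strategy is the classical continuity method applied to a non-degenerate approximation of \eqref{1-1}, combined with a priori estimates up to $C^{2,\alpha}$ and a passage to the limit. I would first replace $\psi$ by $\psi_\varepsilon:=\psi+\varepsilon$ for $\varepsilon>0$ and, after a small perturbation of $\underline u$ if necessary, regard the perturbed function as a strict subsolution for $\psi_\varepsilon$. The continuity method is then set up along a one-parameter family interpolating, in the right-hand side, between $f(\lambda[D^{2}\underline u+\gamma\Delta\underline u\,I])$ and $\psi_\varepsilon$; $t=0$ admits the explicit admissible solution $\underline u$, openness follows from the implicit function theorem because the linearized operator is uniformly elliptic for every admissible function when $\gamma>0$ (see below), and closedness reduces to a priori $C^{2,\alpha}$ bounds independent of $t$ and $\varepsilon$.

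The a priori estimates would be established in the order $C^{0}\to C^{1}\to$ boundary $C^{2}\to$ interior $C^{2}$. The $C^{0}$ bound is a direct comparison, with $\underline u$ as a lower barrier and the harmonic extension of $\varphi$ as an upper barrier (admissibility $\Gamma\supseteq\Gamma_{n}$ forces $(1+n\gamma)\Delta u\geq 0$). The $C^{1}$ bound at $\partial\Omega$ follows from $\underline u\leq u\leq h$ with common trace $\varphi$, and the interior $C^{1}$ bound from the maximum principle applied to $|\nabla u|^{2}$ after differentiating the equation. Boundary $C^{2}$ bounds split into the usual three cases (tangential-tangential, tangential-normal, normal-normal), handled in the Caffarelli-Nirenberg-Spruck spirit by barriers built out of $\underline u$.

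The core obstacle is the interior $C^{2}$ estimate under the degeneracy permitted by $\psi\geq 0$. I would apply the maximum principle to a test function of the form
\[
W \;=\; \log\lambda_{\max}\bigl(D^{2}u+\gamma\Delta u\,I\bigr)+\phi(|\nabla u|^{2})+\eta(u),
\]
with $\phi,\eta$ tuned so that the concavity of $f$ absorbs the uncontrolled third-order terms. The decisive structural input is that the linearization of the operator $u\mapsto F(D^{2}u+\gamma\Delta u\,I)$ is
\[
G^{ij} \;=\; F^{ij} + \gamma\Bigl(\sum\nolimits_{l}F^{ll}\Bigr)\delta^{ij},
\]
whose eigenvalue ratio is bounded by $1+1/\gamma$ independently of where $\lambda$ sits in $\overline{\Gamma}$. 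This pointwise uniform ellipticity, depending only on $\gamma>0$, is what allows the interior argument to go through without any non-degeneracy assumption on $\psi$, and is also what enables Evans-Krylov to upgrade the resulting $C^{2}$ bound to a uniform interior $C^{2,\alpha}$ bound; a boundary $C^{2,\alpha}$ bound follows by Krylov's boundary regularity.

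With the uniform $C^{2,\alpha}$ estimate in hand, the continuity method yields a smooth solution $u_{\varepsilon}$ for each $\varepsilon>0$. Since all estimates are uniform in $\varepsilon$, a subsequence converges in $C^{2,\alpha}(\bar\Omega)$ to an admissible solution $u$ of the original problem; the $\gamma$-induced uniform ellipticity persists in the limit, so Schauder bootstrapping gives $u\in C^{\infty}(\bar\Omega)$. Uniqueness is immediate from the comparison principle for concave degenerate elliptic operators.
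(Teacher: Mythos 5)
Your outline gets the overall skeleton right and correctly identifies the one structural fact that drives everything when $\gamma>0$: the linearized operator is $G^{ij}=F^{ij}+\gamma(\sum_l F^{ll})\delta^{ij}$, whose eigenvalue ratio is bounded by $1+1/\gamma$ uniformly over $\overline{\Gamma}$, which feeds Evans--Krylov and the continuity method exactly as you say. But there are two places where the plan as written would not go through.

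First, the approximation. Replacing $\psi$ by $\psi+\varepsilon$ does not in general preserve $\underline u$ as an admissible subsolution: the hypothesis \eqref{1-5} only gives $f(\lambda(\underline U))\geq\psi$, not $\geq\psi+\varepsilon$, and the ``small perturbation of $\underline u$'' you invoke has to simultaneously keep the boundary trace $\varphi$, keep $\lambda(\underline U)\in\Gamma$, and raise $f(\lambda(\underline U))$ by a uniform amount; on a general non-convex $\Omega$ there is no obvious such perturbation (e.g.\ one cannot add a strictly convex function vanishing on $\partial\Omega$). The paper sidesteps this by perturbing the right-hand side only where $\psi$ is small, taking $\psi_\varepsilon=\psi+\varepsilon\eta(\psi)$ with $\eta$ a cutoff supported in $\{\psi\leq\varepsilon_0/2\}$; since $f(\lambda(\underline U))\geq\varepsilon_0$ everywhere by \eqref{sub}, the unperturbed $\underline u$ remains a subsolution for every $\varepsilon\leq\varepsilon_0/2$. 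This is a genuine device, not a cosmetic choice.

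Second, and more seriously, the boundary second-order estimate. Handling the three pieces ``in the Caffarelli--Nirenberg--Spruck spirit by barriers built out of $\underline u$'' is precisely what fails to close in this setting: the CNS double-normal argument needs either a geometric condition on $\partial\Omega$ such as \eqref{dombd} or strict ellipticity ($\psi\geq\psi_0>0$), neither of which is assumed here. The paper instead follows Ivochkina--Trudinger--Wang (going back to Krylov's Bellman-equation work): one introduces the skew-rotation vector fields $\tau$, proves the concavity inequality $\mathcal L(u_{(\tau)(\tau)})\geq(F[U])_{(\tau)(\tau)}$ (Lemma~\ref{jw-lem1}), constructs the barrier $\Psi$ of \eqref{bd-0} using the two-case dichotomy from Lemma~\ref{ma-lemma-C10} (Theorem~\ref{barrier}), and then runs an iterative improvement scheme that produces $M=u_{nn}(0)\leq C\sqrt\sigma\,M+C_\sigma$ and hence $M\leq C$. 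Crucially, step \eqref{4-14} of that iteration invokes the interior estimate of Theorem~\ref{jw-th2}, so the order of argument is interior $C^2$ first, then boundary $C^2$ — the reverse of your plan. Your sketch treats the boundary estimate as routine, but it is the technical core of the paper, and the CNS template you propose does not apply without extra hypotheses that the theorem deliberately omits.
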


We first introduce our procedure to prove Theorem \ref{jw-th1}.
By \eqref{1-4}, there exists a positive constant $\varepsilon_0$ such that
\begin{equation}
\label{sub}
f (\lambda(D^{2}\underline{u}+\gamma \triangle \underline{u} I)) \geq \varepsilon_0 \mbox{ on } \bar{\Omega}
\end{equation}
since $\lambda (D^{2} \ul u + \gamma \triangle \ul u I) \in \Gamma$.
We shall establish the \emph{a priori} $C^2$ estimates independent of
$\varepsilon$ for admissible solutions of the approximating problem
\begin{equation}
\label{appro}
\left\{
\begin{aligned}
f (\lambda (D^{2}u_{\varepsilon}+\gamma\Delta u_{\varepsilon} I))&=\psi+ \varepsilon \eta (\psi)  \;\;{\mathrm{in}~ \Omega,} \\
u_{\varepsilon} &=\varphi \;\;\mathrm{on}~ \partial \Omega,
\end{aligned}
\right.
\end{equation}
where $\eta \in C^\infty [0, \infty)$ satisfies
\[
\eta (t) = \left\{
\begin{aligned}
& 1 \;\;\mathrm{ if } \ \ t \in [0, \frac{\varepsilon_0}{4}],\\
& 0 \;\;\mathrm{ if } \ \ t \in [\frac{\varepsilon_0}{2}, \infty),
\end{aligned}
\right.
\]
$0 \leq \eta \leq 1$, $|\eta'| \leq C \varepsilon_0^{-1}$ and $|\eta''| \leq C \varepsilon_0^{-2}$.
It follows that, by \eqref{sub},
\[
f (\lambda(D^{2}\underline{u}+\gamma \triangle \underline{u} I)) \geq \psi+ \varepsilon \eta (\psi)
\]
provided $\varepsilon \leq \frac{\varepsilon_0}{2}$ and obviously,
$\psi+ \varepsilon \eta (\psi) \geq \min \{\varepsilon, \varepsilon_0/4\} > 0$.

We shall use the techniques of Guan \cite{Guan14} (see \cite{GJ14} and \cite{GSS14} also)
to establish such estimates.
As usual, the main difficulty is from the boundary estimates
of pure normal second order derivative for which we use
the strategy of Ivochkina, Trudinger and Wang \cite{ITW2004}
whose idea is originally from Krylov \cite{Krylov84,Krylov94,Krylov95a,Krylov95b}
where the Bellman equations are studied.
A key step is
the construction of barrier functions in which the existence
of $\ul u$ plays an important role (see Theorem \ref{barrier}).

The presence of $\gamma > 0$ is crucial to
the interior estimates for second derivatives. An interesting
question is to establish the weak interior estimates (see \cite{ITW2004})
when $\gamma = 0$.

For the case that $\psi \geq \psi_0 > 0$,
the existence of smooth solutions to
the Dirichlet problem (\ref{1-1}) with $\gamma=0$
was established by Caffarelli, Nirenberg and Spruck \cite{CNS3}
under additional assumptions on $f$ in a domain $\Omega$
satisfying that there exists a sufficiently
large number $R > 0$ such that, at every point $x \in \partial \Omega$,
\begin{equation}
\label{dombd}
(\kappa_1, \ldots, \kappa_{n-1}, R) \in \Gamma,
\end{equation}
where $\kappa_1, \ldots, \kappa_{n-1}$ are the principal curvatures of $\partial \Omega$
with respect to the interior normal.
Their work was further developed and simplified by Trudinger \cite{Trudinger95}.

Guan considered the Hessian equations of the form
\begin{equation}
\label{1-1'}
f (\lambda [\nabla^2 u + \gamma \triangle u g + s du \otimes du
  - \frac{t}{2} |\nabla u|^2 g + A]) = \psi (x, u, \nabla u)
\end{equation}
on a Riemannian manifold with metric $g$ with $\psi > 0$, which is arising from conformal
geometry (see \cite{Guan2007} and \cite{Guan2008}). In these papers
Guan also assumed that $f$ is homogenous of degree one which implies
that the equation \eqref{1-1'} is strictly elliptic.
It would be interesting to prove Theorem \ref{jw-th1}
for the general form \eqref{1-1'} on manifolds
when $\psi \geq 0$ without any additional conditions on $f$.
The case that $\gamma = 0$ seems more complicated.
In a recent work \cite{Guan14}, Guan proved Theorem \ref{jw-th1}
under \eqref{1-2}-\eqref{1-5}
when $\gamma = 0$ and $\psi \geq \psi_0 > 0$.
Another interesting question would be whether we can get a viscosity
solution in $C^{1,1} (\bar{\Omega})$ for $\gamma = 0$ when $\psi \geq 0$.


It was shown in \cite{CNS3} that using
\eqref{dombd} and the condition that for every $C > 0$ and every compact set $K$ in $\Gamma$
there is a number $R = R (C, K)$ such that
\begin{equation}
\label{f9}
f (R \lambda) \geq C \mbox{  for all } \lambda \in K
\end{equation}
one can construct admissible strict subsolutions of equation \eqref{1-1} with $\gamma = 0$.
Obviously $\Gamma \subset \{\lambda \in \mathbb{R}^n: \sum \lambda_i > 0\}$
and we have $\triangle u \geq 0$ for any admissible function $u$. So we can
construct an admissible strict subsolution of \eqref{1-1}
when $\gamma \geq 0$ satisfying \eqref{1-5}
under \eqref{dombd} and \eqref{f9} by the same way.

Typical examples are given by $f = \sigma^{1/k}_k$ and $f = (\sigma_k / \sigma_l)^{1/(k - l)}$,
$1 \leq l < k \leq n$, defined in the G{\aa}rding cone
\[
\Gamma_{k} = \{\lambda \in \mathbb{R}^{n}: \sigma_{j} (\lambda) > 0, j = 1, \ldots, k\},
\]
where $\sigma_{k}$ are the elementary symmetric functions
\[
\sigma_{k} (\lambda) = \sum_ {i_{1} < \ldots < i_{k}}
\lambda_{i_{1}} \ldots \lambda_{i_{k}},\ \ k = 1, \ldots, n.
\]

The case when $f = \sigma^{1/n}_n$ (the Monge-Amp\`{e}re equation) and
$\gamma = 0$ was studied by Guan, Trudinger and Wang \cite{GTW99}
and they obtained the $C^{1, 1}$ regularity as
$\psi^{1/(n-1)} \in C^{1, 1} (\bar{\Omega})$. It would be
an interesting problem to show whether the result can be improved for the
$f = \sigma^{1/k}_k$ (see \cite{ITW2004}).

The rest of this paper is organized as follows.
In Section 2, we prove Theorem \ref{jw-th1} provided the
$C^2$ estimates for \eqref{appro} is established.
$C^1$ estimate is treated in Section 3.
The interior second order estimate is proved in Section 4.
In section 5, the estimates for second derivatives are established.

\section{Beginning of proof}

In this Section we explain how to prove Theorem \ref{jw-th1} when the second order
estimates for \eqref{appro} are established.
Let $u_\varepsilon \in C^4 (\bar{\Omega})$ be the admissible solution of \eqref{appro}.
For simplicity we shall use the notations
$U^\varepsilon = D^{2} u_\varepsilon + \gamma \triangle u_\varepsilon I$
and $\underline U = D^{2} \underline u + \gamma \triangle \underline u I$.
Following the literature, unless otherwise noted, we denote throughout this paper
\[ F^{ij} [U^\varepsilon] = \frac{\partial F}{\partial U^\varepsilon_{ij}} [U^\varepsilon], \;\;
 F^{ij, kl} [U^\varepsilon]
   = \frac{\partial^2 F}{\partial U^\varepsilon_{ij} \partial U^\varepsilon_{kl}} [U^\varepsilon]. \]
The matrix $\{F^{ij}\}$ has eigenvalues $f_1, \ldots, f_n$ and
is positive definite by assumption \eqref{1-2}, while \eqref{1-3}
implies that $F$ is a concave function of $U^\varepsilon_{ij}$ (see \cite{CNS3}).
Moreover, when $U^\varepsilon$ is diagonal so is $\{F^{ij}\}$, and the
following identities hold
\[
F^{ij} U^\varepsilon_{ij} = \sum f_i \lambda_i, \;\;
F^{ij} U^\varepsilon_{ik} U^\varepsilon_{kj} = \sum f_i \lambda_i^2, \;\;
\lambda [U^\varepsilon] = (\lambda_1, \ldots, \lambda_n).
\]

Suppose $\gamma > 0$ and
we have proved that there exists a constant independent of $\varepsilon$ such that
\begin{equation}
\label{1-7}
|u_\varepsilon|_{C^2 (\bar{\Omega})} \leq C.
\end{equation}

Therefore, by the concavity of $F$,
\[
F^{ij} [U_\varepsilon] (A \delta_{ij} - U^\varepsilon_{ij}) \geq F [A I] - F [U_\varepsilon]
   \geq c_0 > 0
\]
by fixing $A$ sufficiently large. On the other hand, $-F^{ij} U^\varepsilon_{ij} \leq C \sum F^{ii}$
by \eqref{1-7}. Then we get
\[
\sum F^{ii} \geq \frac{c_0}{A + C} > 0.
\]
Note that
\[
\{\frac{\partial F}{\partial u^\varepsilon_{ij}} [U_\varepsilon]\} = \{F^{ij} [U_\varepsilon]\}
  + \gamma \sum F^{ii} I \geq \frac{\gamma c_0}{A + C} I.
\]
Thus, there exists uniform constants $0 < \lambda_0 \leq \Lambda_0 < \infty$ such that
\[
\lambda_0 I \leq \{\frac{\partial F}{\partial u^\varepsilon_{ij}} [U_\varepsilon]\} \leq \Lambda_0 I.
\]
Hence Evans-Krylov theory (see \cite{Evans82} and \cite{Krylov83}) assures a bound $M$
independent of $\varepsilon$ such that
\[
|u_\varepsilon|_{C^{2, \alpha} (\bar{\Omega})} \leq M,
\]
for some constant $\alpha \in (0, 1)$. The higher regularity can be derived by the
Schauder theory (see \cite{GT83} for example).
Using standard method of continuity, we can obtain the existence
of smooth solution to \eqref{appro}.
By sending $\varepsilon$ to zero (taking a subsequence if necessary),
we can prove Theorem \ref{jw-th1}.

In the following sections, we may drop the subscript $\varepsilon$ when
there is no possible confusion.

\bigskip

\section{The gradient estimates}

In this section, we consider the gradient estimates for the admissible
solution to \eqref{appro}.
We first observe that $\lambda[U]\in\Gamma\subset\{\sum\lambda_{i}>0\}$
and therefore,
\begin{equation}\label{2-1}
tr[U]=(1+n\gamma)\Delta u>0.
\end{equation}
Thus we have by the maximum principle that
\begin{equation*}
\underline{u}\leq u\leq h  \;\;~\mathrm{in} ~\bar{\Omega}
\end{equation*}
where $h$ is the harmonic function in $\Omega$ with $h=\varphi$ on $\partial \Omega$. Then we obtain
\begin{equation}\label{2-2}
\sup_{\bar{\Omega}}|u|+\sup_{\partial \Omega}|D u|\leq C,
\end{equation}
for some positive constant $C$ independent of $\varepsilon$.

To establish the global gradient estimates, we assume that $|D u| e^{\phi}$
achieves a maximum at an interior point $x_0 \in \Omega$,
where $\phi$ is a function to be determined.
We may assume
$D^2 u$ and $\{F^{ij}\}$ are diagonal at $x_0$ by
rotating the coordinates if necessary. Then at $x_0$ where the function
$\log |D u| + \phi$ attains its maximum, we have
\begin{equation}
\label{eq4-0-1}
 \frac{u_k u_{ki}}{|D u|^2} + \phi_i = 0
\end{equation}
and
\begin{equation}
\label{eq4-0-3}
\frac{u_k u_{kii} + u_{ki} u_{ki}}{|D u|^2}
  - 2 \frac{(u_k u_{ki})^2}{|D u|^4} + \phi_{ii} \leq 0
\end{equation}
for each $i = 1, \cdots, n$.
Differentiating the equation \eqref{appro}, we get, at $x_0$,
\begin{equation}
\label{dif-1}
F^{ii} u_{kii} + \gamma \triangle u_k \sum F^{ii}
  = \psi_k + \varepsilon \eta' \psi_k.
\end{equation}
It follows that
\begin{equation}
\label{dif-1'}
F^{ii} u_k u_{kii} + \gamma u_k \triangle u_k \sum F^{ii} \geq - C |D u|.
\end{equation}
Note that
\begin{equation}
\label{eq4-0-10}
\begin{aligned}
U^2_{ii} = (u_{ii} + \gamma \triangle u)^2
     \leq \,& 2 u^2_{ii} + 2 \gamma^2 (\triangle u)^2
        \leq 2 u^2_{ii} + 2 n \gamma^2 \sum_j u^2_{jj}\\
          \leq \,& 2 n \max \{\gamma, 1\} (u^2_{ii} + \gamma \sum_j u^2_{jj}).
\end{aligned}
\end{equation}
Therefore, by \eqref{eq4-0-1}, \eqref{eq4-0-3}, \eqref{dif-1'} and \eqref{eq4-0-10}, we have
\begin{equation}
\label{eq4-1}
\begin{aligned}
& c_0 F^{ii} U_{ii}^2
   + |D u|^2 \Big(F^{ii} \phi_{ii} + \gamma \triangle \phi \sum F^{ii}\Big)\\
  \leq \,& C |D u| + 2 |D u|^2 \Big(F^{ii} \phi_i^2 + \gamma |D \phi|^2 \sum F^{ii}\Big),
\end{aligned}
\end{equation}
where $c_0 = (2 n \max \{\gamma, 1\})^{-1}$.

Let $v = \ul u - u + \inf_{\bar{\Omega}} (u - \ul u) + 1$ and $\phi = \frac{\delta v^2}{2}$,
where $\delta$ is a positive constant to be determined. Choosing $\delta$ sufficiently small,
we can  guarantee that
\[
\delta - 2 \delta^2 v^2 > 0 \mbox{ on } \bar{\Omega}.
\]
Let $c_1 = \min_{x \in \bar{\Omega}} \Big(\delta - 2 \delta^2 v^2 (x)\Big) > 0$.
It follows from \eqref{eq4-1}
that
\begin{equation}
\label{eq4-2}
\begin{aligned}
& c_0 F^{ii} U_{ii}^2
   + |D u|^2 \mathcal{L} (\ul u - u)\\
  \leq \,& C |D u| - (\delta - 2 \delta^2 v^2) |D u|^2 \Big(F^{ii} v_i^2 + \gamma |D v|^2 \sum F^{ii}\Big)\\
   \leq \,& C |D u| - c_1 |D u|^2 \Big(F^{ii} v_i^2 + \gamma |D v|^2 \sum F^{ii}\Big).
\end{aligned}
\end{equation}

Write
$\mu (x) = \lambda (D^2 \ul u (x) + \gamma \triangle \ul u (x) I)$ and note
that $\{\mu (x): x \in \bar{\Omega}\}$ is a compact subset of
$\Gamma$. There exists uniform constant $\beta \in (0, \frac{1}{2
\sqrt{n}})$ such that
\begin{equation}
\label{eq2-16}
     \nu_{\mu (x)} -  2 \beta {\bf 1} \in \Gamma_n,
     \;\;  \forall \, x \in \bar{\Omega}
\end{equation}
where ${\bf 1} = (1, \ldots, 1) \in \mathbb{R}^n$ and
$\nu_{\lambda} := Df (\lambda)/|Df (\lambda)|$ is the unit
normal vector to the level hypersurface $\partial \Gamma^{f (\lambda)}$ for
$\lambda \in \Gamma$. We need the following lemma proved by Guan in \cite{Guan14}.
\begin{lemma}
\label{ma-lemma-C10}
For any fixed $x\in\bar{M}$, denote
$\tilde{\mu} = \mu (x)$ and $\tilde{\lambda} = \lambda (U
(x))$. Suppose that $|\nu_{\tilde{\mu}}-
\nu_{\tilde{\lambda}}| \geq \beta$. Then there exists a uniform constant
$\theta > 0$ such that
\begin{equation}
\label{gj-C160'}
  \sum f_i (\tilde{\lambda}) (\tilde{\mu}_i - \tilde{\lambda}_i)
   \geq \theta \Big( 1 + \sum f_i (\tilde{\lambda})\Big).
\end{equation}
\end{lemma}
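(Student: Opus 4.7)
The plan is to exploit the concavity of $f$, which combined with the subsolution property $f(\tilde{\mu}) \geq f(\tilde{\lambda})$ gives the starting inequality
\[
\sum f_i(\tilde{\lambda})(\tilde{\mu}_i - \tilde{\lambda}_i) \geq f(\tilde{\mu}) - f(\tilde{\lambda}) \geq 0.
\]
The task is then to promote this nonnegativity to the quantitative bound $\theta(1 + \sum f_i(\tilde{\lambda}))$ using the angular gap $|\nu_{\tilde{\mu}} - \nu_{\tilde{\lambda}}| \geq \beta$. I would split into two regimes depending on the size of $\sum f_i(\tilde{\lambda})$.

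\textbf{Regime of large $\sum f_i(\tilde{\lambda})$.} Here the angular condition is not needed. Since $\{\mu(x): x \in \bar{\Omega}\}$ is a fixed compact subset of $\Gamma$, there is a uniform $\sigma > 0$ with $\tilde{\mu} - \sigma\mathbf{1} \in \Gamma$. Applying concavity of $f$ at $\tilde{\lambda}$ to the test point $\tilde{\mu} - \sigma\mathbf{1}$, together with a uniform Lipschitz bound for $f$ in a neighborhood of $\{\mu(x)\}$ and $f(\tilde{\mu}) \geq f(\tilde{\lambda})$, yields
\[
\sum f_i(\tilde{\lambda})(\tilde{\mu}_i - \tilde{\lambda}_i) \geq \sigma \sum f_i(\tilde{\lambda}) - C\sigma,
\]
which dominates $\theta(1 + \sum f_i(\tilde{\lambda}))$ once $\sum f_i(\tilde{\lambda})$ exceeds a uniform threshold $R$.

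\textbf{Regime of bounded $\sum f_i(\tilde{\lambda}) \leq R$.} I would argue by contradiction and compactness. If the claim failed, there would be a sequence of points along which the angular gap persists but $\sum f_i(\tilde{\lambda}^k)(\tilde{\mu}^k_i - \tilde{\lambda}^k_i) \to 0$. The $\tilde{\mu}^k$ converge in the compact set, and $|Df(\tilde{\lambda}^k)|$ is bounded, so on subsequences the normals and $Df(\tilde{\lambda}^k)$ converge. If $\tilde{\lambda}^k \to \tilde{\lambda}^\infty \in \Gamma$, the concavity inequality becomes tight in the limit: $f(\tilde{\mu}^\infty) = f(\tilde{\lambda}^\infty)$ and $Df(\tilde{\lambda}^\infty)\cdot(\tilde{\mu}^\infty - \tilde{\lambda}^\infty) = 0$. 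Setting $g(t) := f(\tilde{\lambda}^\infty + t(\tilde{\mu}^\infty - \tilde{\lambda}^\infty))$, concavity combined with $g(0) = g(1)$ and $g'(0) = 0$ forces $g$ to be constant on $[0, 1]$. Then $g''(t) \equiv 0$; since $D^2 f \leq 0$, this in turn forces $D^2 f \cdot (\tilde{\mu}^\infty - \tilde{\lambda}^\infty) \equiv 0$ along the segment, making $Df$, and hence $\nu$, constant there---contradicting the angular gap. If instead $\tilde{\lambda}^\infty \in \partial \Gamma$, then $f(\tilde{\lambda}^\infty) = 0$ while $f(\tilde{\mu}^\infty) \geq \varepsilon_0 > 0$ by \eqref{sub}, so $f(\tilde{\mu}^k) - f(\tilde{\lambda}^k) \geq \varepsilon_0/2$ eventually and the lower bound follows directly from the initial concavity inequality.

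The hardest piece is the case in which $\tilde{\lambda}^k$ escapes to infinity while $\sum f_i(\tilde{\lambda}^k)$ remains bounded. Here I would use the Euler-type estimate $\sum f_i(\tilde{\lambda})\tilde{\lambda}_i \leq f(\tilde{\lambda})$ (obtained by applying concavity at $\tilde{\lambda}$ with the vertex $0 \in \bar{\Gamma}$, where $f = 0$) to control the diverging term and pass to a blown-down limit of $\tilde{\lambda}^k/|\tilde{\lambda}^k|$; the angular hypothesis, combined with $\nu_{\tilde{\mu}} - 2\beta\mathbf{1} \in \Gamma_n$, should again propagate to a quantitative contradiction. Alternatively one may absorb this scenario into the Regime of large $\sum f_i$ by a sharper choice of translation vector, allowing the two regimes to overlap.
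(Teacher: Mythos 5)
Note first that the paper itself does not prove this lemma: it is quoted verbatim from Guan \cite{Guan14} and invoked as a black box, so there is no in-paper proof for direct comparison. Judged on its own terms, your proposal is sound and fairly close in spirit to the concavity-plus-compactness arguments used in that reference, but it contains a genuine, self-acknowledged gap.

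What works. Your opening concavity inequality and the treatment of the regime $\sum f_i(\tilde\lambda)\geq R$ are correct: testing concavity at $\tilde\mu-\sigma\mathbf 1$ together with $f(\tilde\mu)\geq f(\tilde\lambda)$ and a uniform Lipschitz bound near the compact set $\{\mu(x)\}$ gives $\sum f_i(\tilde\lambda)(\tilde\mu_i-\tilde\lambda_i)\geq \sigma\sum f_i(\tilde\lambda)-C\sigma$, which absorbs the $\theta(1+\sum f_i)$ once $\sum f_i$ is large, with no use of the angular gap. In the bounded regime, your contradiction argument for the subcase $\tilde\lambda^k\to\tilde\lambda^\infty\in\Gamma$ is correct and well executed (tightness of concavity along the segment $\Rightarrow$ $g''\equiv 0$ $\Rightarrow$ $D^2f\cdot(\tilde\mu^\infty-\tilde\lambda^\infty)\equiv 0$ by negative semidefiniteness $\Rightarrow$ $Df$, hence $\nu$, constant, contradicting the angular gap), as is the subcase $\tilde\lambda^\infty\in\partial\Gamma$ using $f(\tilde\mu)\geq\varepsilon_0$.

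The gap. The escape-to-infinity subcase ($|\tilde\lambda^k|\to\infty$ while $\sum f_i(\tilde\lambda^k)$ stays bounded) is not resolved, and you flag this yourself. The sketch does not close: the Euler-type bound $\sum f_i(\lambda)\lambda_i\leq f(\lambda)$ controls that quantity only from above, and after blowing down the limiting normal $\nu^\infty$ is a unit vector not attached to any point of $\bar\Gamma$, so the concavity-tightness trick has no foothold. Nor can this case be absorbed into the large-$\sum f_i$ regime as you suggest, since the translation-down estimate furnishes only $\geq\sigma\sum f_i-C\sigma$, which can be negative when $\sum f_i$ is moderate, and nothing in your argument rules out $|\tilde\lambda^k|\to\infty$ with $\sum f_i$ bounded (this does occur, e.g.\ along rays $\lambda=(t,ct)$ for $f=\sigma_2^{1/2}$).

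How to close it. One clean way, which avoids compactness in $\lambda$ altogether, is to replace the compactness argument by two further elementary estimates. First, translate \emph{up} rather than down: test concavity at $\tilde\mu+\rho_0\mathbf 1\in\Gamma$ (with $\rho_0$ uniform over the compact set $K=\{\mu(x)\}$) to obtain
\[
\sum f_i(\tilde\lambda)(\tilde\mu_i-\tilde\lambda_i)\;\geq\;f(\tilde\mu+\rho_0\mathbf 1)-f(\tilde\mu)-\rho_0\sum f_i(\tilde\lambda)\;\geq\;c_1\rho_0-\rho_0\sum f_i(\tilde\lambda),
\]
with $c_1=\inf_{K}\sum f_i>0$; this gives an absolute lower bound when $\sum f_i(\tilde\lambda)<c_1/2$, no angular gap required. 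Second, for the remaining intermediate range $c_1/2\leq\sum f_i(\tilde\lambda)<R$, use the angular gap geometrically: by compactness of $K$ the level hypersurface $\{f=f(\tilde\mu)\}$ satisfies a uniform interior-ball condition at $\tilde\mu$, i.e.\ $B_\rho(\tilde\mu+\rho\nu_{\tilde\mu})\subset\{f\geq f(\tilde\mu)\}\subset\{f\geq f(\tilde\lambda)\}$, and since the tangent hyperplane at $\tilde\lambda$ with normal $\nu_{\tilde\lambda}$ supports the convex set $\{f\geq f(\tilde\lambda)\}$, the ball must lie on its positive side, forcing
\[
\nu_{\tilde\lambda}\cdot(\tilde\mu-\tilde\lambda)\;\geq\;\rho\bigl(1-\nu_{\tilde\lambda}\cdot\nu_{\tilde\mu}\bigr)\;\geq\;\tfrac{\rho\beta^2}{2}.
\]
Multiplying by $|Df(\tilde\lambda)|\geq n^{-1/2}\sum f_i(\tilde\lambda)\geq n^{-1/2}c_1/2$ gives the required absolute lower bound in the intermediate range. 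Together with your translation-down estimate for large $\sum f_i$, the three regimes exhaust all possibilities and the proof closes without any compactness in $\tilde\lambda$, and in particular without needing to rule out $|\tilde\lambda|\to\infty$.

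Also, be aware that your proof silently uses $f(\tilde\mu)\geq f(\tilde\lambda)$; this is implicit in the lemma's setting (subsolution vs.\ solution) but should be stated as a hypothesis if the lemma is to stand alone.
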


Now let ${\mu} = \lambda (D^2 \ul u (x_0) + \gamma \triangle \ul u (x_0))$,
$\lambda = \lambda (D^2 u (x_0) + \gamma \triangle u (x_0))$ and $\beta$
as in \eqref{eq2-16}. Suppose first that $|\nu_{\mu}- \nu_{\lambda}|
\geq \beta$. Define the linear operator $\mathcal{L}$ by
\[
\mathcal{L} v := F^{ij} v_{ij} + \gamma \triangle v \sum F^{ii}
\]
for $v \in C^2 (\Omega)$.
By Lemma~\ref{ma-lemma-C10},
\begin{equation}
\label{eq0-0}
\mathcal{L} (\ul u - u)
    \geq \theta \Big(\sum F^{ii} + 1\Big)
\end{equation}
for some $\theta > 0$. Thus, we can obtain a bound $|D u (x_0)| \leq C / \theta$
from \eqref{eq4-2}.

We now consider the case $|\nu_{\mu}- \nu_{\lambda}| < \beta$
which implies $\nu_{\lambda} - \beta {\bf 1} \in \Gamma_n$
and therefore
\begin{equation}
\label{eq2-22}
 F^{ii} \geq \frac{\beta}{\sqrt{n}} \sum F^{kk},
    \;\; \forall \, 1 \leq i \leq n.
\end{equation}
By the concavity of $F$ we know that
\begin{equation}
\label{eq0-1}
\mathcal{L} (\ul u - u) \geq 0.
\end{equation}
By the concavity of $f$ again, when $|\lambda| \geq R$
for $R$ sufficiently large, we derive as in \cite{Guan14}
\begin{equation}
\label{eq4-5}
\begin{aligned}
  |\lambda| \sum F^{ii}
  \geq \,& f (|\lambda| {\bf 1}) - f (\lambda)
        + \sum F^{ii} \lambda_i \\
  \geq \,& f (|\lambda| {\bf 1}) - f (\mu)
       - |\lambda| \sum F^{ii}\\
  \geq \,& 2 b_0 - |\lambda| \sum F^{ii}.
\end{aligned}
\end{equation}
for some uniform positive constant $b_0$.
Therefore, by \eqref{eq2-22} and \eqref{eq4-5}, we find
\begin{equation}
\label{eq4-7}
\begin{aligned}
c_0 F^{ii} U_{ii}^2 + c_1 |D u|^2 F^{ii} v_i^2
  \geq \,& \frac{\beta}{\sqrt{n}} \Big(c_0 |\lambda|^2 \sum F^{ii} + \frac{c_1}{2} |D u|^4 \sum F^{ii}\Big)\\
  \geq \,& \frac{\sqrt{2 c_0 c_1} \beta}{\sqrt{n}}  |D u|^2 |\lambda| \sum F^{ii} \\
  \geq \,& c_2 |D u|^2
\end{aligned}
\end{equation}
provided $|D u|$ is sufficiently large, where $c_2 = \frac{\sqrt{2 c_0 c_1} \beta b_0}{\sqrt{n}}$.
Thus, from \eqref{eq4-2} and \eqref{eq4-7} we can get a bound
$|D u| \leq C / c_2$.

Suppose $|\lambda| \leq R$. By the concavity of $F$, we have (see \cite{GSS14})
\[
2 R \sum F^{ii} \geq F^{ii} U_{ii} + F (2 R I) - F (U)
   \geq - R \sum F^{ii} + b_1,
\]
where $b_1 = F (2 R I) - F (R I) > 0$. It follows that
\begin{equation}
\label{gd-0}
\sum F^{ii} \geq \delta_0 \equiv \frac{b_1}{3 R}
\end{equation}
and
\[
c_1 |D u|^2 F^{ii} v_i^2 \geq \frac{c_1 \beta}{2 \sqrt{n}} |D u|^4 \sum F^{ii}
   \geq \frac{c_1 \beta \delta_0}{2 \sqrt{n}} |D u|^4
\]
provided $|D u|$ is sufficiently large.
We then obtain from \eqref{eq4-2} that $|D u (x_0)| \leq (2 \sqrt{n} C / c_1 \beta \delta_0)^{1/3}$.

Hence we have proved that
\begin{equation}\label{2-5}
|u|_{C^1 (\bar{\Omega})} \leq C
\end{equation}
for some positive constant $C$ independent of $\varepsilon$.

\medskip

\section{Interior and global estimates for second derivatives}

In this section, we prove the interior second order estimate.

\begin{theorem}
\label{jw-th2}
Let $\gamma > 0$ and $u \in C^4 (\Omega)$ be an admissible solution
of \eqref{appro}. Then for any $\Omega' \subset\subset \Omega$,
there exists a constant $C$ depending on $\gamma^{-1}$,
$d' \equiv \mathrm{dist} (\Omega', \partial \Omega)$,
$|u|_{C^1 (\bar{\Omega})}$
and other known data such that
\begin{equation}
\label{3-1}
\sup_{\bar{\Omega}'} |D^2 u| \leq C.
\end{equation}
\end{theorem}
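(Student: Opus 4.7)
The plan is to establish \eqref{3-1} via a maximum principle argument applied to the test function
\[
W(x) := \rho(x)^{2}\,\lambda_{1}[U(x)]\,e^{\phi(x)},
\]
where $\rho\in C_{0}^{\infty}(\Omega)$ is a standard cutoff with $\rho\equiv 1$ on $\Omega'$, $0\leq\rho\leq 1$, and $|D\rho|^{2}/\rho+|D^{2}\rho|\leq C/{d'}^{2}$; $\lambda_{1}[U]$ is the largest eigenvalue of $U=D^{2}u+\gamma\Delta u\,I$; and $\phi$ is an auxiliary function of $|Du|^{2}$ and $\ul u-u$. Since $\lambda[U]\in\bar\Gamma\subset\{\sum\lambda_{i}\geq 0\}$, an upper bound on $\lambda_{1}[U]$ also bounds $|\lambda_{i}[U]|$ for each $i$, and hence $|D^{2}u|$; so it is enough to show $W\leq C$ on $\bar\Omega$. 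Non-smoothness of $\lambda_{1}$ in case of a multiple top eigenvalue is handled by the standard Urbas-type perturbation.

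Suppose $W$ attains its interior maximum at $x_{0}$. After rotating coordinates, $D^{2}u$ (and hence $\{F^{ij}\}$) is diagonal at $x_{0}$ with $\lambda_{1}[U]=U_{11}$, and the first and second order conditions at $x_{0}$ read
\[
\frac{2\rho_{i}}{\rho}+\frac{(U_{11})_{i}}{U_{11}}+\phi_{i}=0,
\]
\[
F^{ii}\!\left(\tfrac{2\rho_{ii}}{\rho}-\tfrac{2\rho_{i}^{2}}{\rho^{2}}+\tfrac{(U_{11})_{ii}}{U_{11}}-\tfrac{(U_{11})_{i}^{2}}{U_{11}^{2}}+\phi_{ii}\right)\leq 0.
\]
Differentiating \eqref{appro} twice in the $e_{1}$-direction and using concavity of $F$ in $U$ yields $F^{ii}(U_{ii})_{11}\geq -C$, while commuting partial derivatives gives the identity
\[
F^{ii}(U_{11})_{ii}=F^{ii}(U_{ii})_{11}-\gamma(\Delta u)_{11}\sum F^{ii}+\gamma F^{ii}(\Delta u)_{ii}.
\]
This identity is the precise place where the hypothesis $\gamma>0$ begins to contribute.

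The auxiliary function is chosen of the form $\phi=\tfrac{a}{2}|Du|^{2}+b(\ul u-u)$ for positive constants $a,b$ to be determined. Two positive quantities are then available: differentiating \eqref{appro} once, multiplying by $u_{k}$, and using that the resulting $u_{k}(\Delta u)_{k}\sum F^{ii}$ contributions cancel, one obtains
\[
\mathcal{L}(|Du|^{2})\geq 2F^{ii}u_{ki}^{2}+2\gamma|D^{2}u|^{2}\sum F^{ii}-C,
\]
and $\mathcal{L}(\ul u-u)\geq 0$ is the concavity inequality already used in Section~3. Using the first-order condition to express $(U_{11})_{i}/U_{11}$ in terms of $-2\rho_{i}/\rho-\phi_{i}$, the bad term $F^{ii}(U_{11})_{i}^{2}/U_{11}^{2}$ reduces to a sum of cutoff and $|\phi_{i}|^{2}$ pieces; after multiplying the basic inequality by $U_{11}$, the gradient piece, essentially $a^{2}F^{ii}u_{ki}^{2}|Du|^{2}$, is absorbed by the $a\gamma|D^{2}u|^{2}\sum F^{ii}$ slack provided $a$ is chosen large depending on $\gamma^{-1}$, while the cutoff piece of order $C{d'}^{-2}U_{11}\sum F^{ii}$ is controlled using the lower bound $\sum F^{ii}\geq \delta_{0}>0$ already established in Section~3. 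The remaining $\phi_{ii}$ and $\psi$-derivative contributions are absorbed by the $\mathcal{L}(\ul u-u)$ slack with $b$ chosen sufficiently large, yielding $\rho(x_{0})^{2}U_{11}(x_{0})\leq C$.

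The main obstacle is this absorption step: verifying that the quadratic-in-Hessian slack $\gamma|D^{2}u|^{2}\sum F^{ii}$ is strong enough to dominate simultaneously the third-order bad terms arising from $F^{ii}(U_{11})_{i}^{2}/U_{11}$ and the cutoff contribution of order $C/{d'}^{2}$. The dependence of the final constant on $\gamma^{-1}$ enters precisely here, and this is also why, as remarked in the introduction, the analogous interior estimate is not expected to hold in the degenerate case $\gamma=0$.
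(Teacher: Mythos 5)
Your overall strategy coincides with the paper's: maximize a cutoff times the largest second derivative times $e^{\phi}$, differentiate the equation twice, and exploit the slack $\gamma |D^2 u|^2 \sum F^{ii}$ generated by $\mathcal{L}\phi$ with $\phi$ containing $\frac{a}{2}|Du|^2$. However there are two concrete bookkeeping errors that invalidate the write-up as it stands.

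First, the direction of the choice of $a$ is reversed. To absorb the bad term $a^2 F^{ii} u_{ki}^2 |Du|^2$ into the good term $a\gamma |D^2 u|^2 \sum F^{ii}$, you need (using $|Du| \leq C$ and $F^{ii} u_{ii}^2 \leq |D^2 u|^2 \sum F^{ii}$ at a point where $D^2 u$ is diagonal) the inequality $a\gamma \geq C a^2$, i.e. $a \leq \gamma/C$. So $a$ must be taken \emph{small}, roughly proportional to $\gamma$; taking $a$ large (and in particular large proportional to $\gamma^{-1}$) makes the quadratic-in-$a$ bad term dominate and the argument collapses. The $\gamma^{-1}$ dependence of the final constant arises downstream, because the coefficient of the surviving good term is of size $a\gamma \sim \gamma^2$ (more precisely, the paper's constant $c_3$ in \eqref{3-1000} scales like $\gamma$), not from choosing $a$ large. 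In the paper's notation this is exactly the choice $\delta - C\delta^2 > 0$, forcing $\delta$ small.

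Second, the absorption mechanism for the cutoff term is misattributed. The cutoff contribution is itself proportional to $\sum F^{ii}$, so a lower bound $\sum F^{ii} \geq \delta_0$ does nothing to control it; rather, it is the fact that $\gamma |D^2 u|^2 \sum F^{ii}$ also carries a factor $\sum F^{ii}$ that lets one compare coefficients and obtain a bound of the form $|D^2 u| \, \zeta \leq C$ once $|D^2 u|$ is large. What the lower bound $\sum F^{ii} \geq \delta_0$ (or the sharper \eqref{eq4-5}, $|\lambda|\sum F^{ii} \geq b_0$) is actually used for is beating the \emph{bare} constant terms, $-C/u_{11}$ and $-C\delta$, by converting part of $\gamma |\lambda|^2 \sum F^{ii}$ into a term without $\sum F^{ii}$; see \eqref{3-1001}.

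Finally, a minor point of economy: your $\phi$ includes $b(\ul u - u)$, and you invoke $\mathcal{L}(\ul u - u) \geq 0$. This is correct by concavity but unnecessary, and the paper explicitly remarks after Theorem \ref{jw-th2} that the interior estimate does not require a subsolution at all. The concavity bounds \eqref{eq4-5} and \eqref{gd-0} that the paper uses only need $\psi$ (and hence $f(\lambda)$) to be bounded, not the existence of $\ul u$. Including the subsolution term weakens the conclusion in a way the paper is careful to avoid.
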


\begin{proof}
Let
\[
W = \max_{x \in \bar{\Omega}, |\xi| = 1} \zeta (x) e^{\phi (x)} D_{\xi\xi} u (x)
\]
where $\zeta$ and $\phi$ are functions to be determined with $\zeta$ satisfying
\begin{equation}
\label{zeta}
0 \leq \zeta \leq 1,\ \ |D \zeta| \leq a_0,\ \ |D^2 \zeta| \leq a_0\ \
   \mbox{ on } \bar{\Omega}.
\end{equation}
Assume that $W$ is achieved at
$x_0 \in \Omega$ and $\xi_0 = e_1 = (1, 0, \ldots, 0)$.
We may also assume that $D^2 u$ is diagonal at $x_0$.
We have, at $x_0$ where the function $\log u_{11} + \log \zeta + \phi$
attains its maximum,
\begin{equation}\label{3-2}
\frac{u_{11i}}{u_{11}} + \frac{\zeta_i}{\zeta} + \phi_i = 0
\mbox{  for each $i = 1, \ldots, n$},
\end{equation}
\begin{equation}\label{3-3}
F^{ii} \Big\{\frac{u_{11ii}}{u_{11}}
   - \Big(\frac{u_{11i}}{u_{11}}\Big)^2
   - \Big(\frac{\zeta_i}{\zeta}\Big)^2  + \frac{\zeta_{ii}}{\zeta} + \phi_{ii}\Big\} \leq 0.
\end{equation}
and
\begin{equation}\label{3-3-0}
\frac{\triangle u_{11}}{u_{11}}
   - \sum_i \Big(\frac{u_{11i}}{u_{11}}\Big)^2
   - \sum_i \Big(\frac{\zeta_i}{\zeta}\Big)^2 + \frac{\triangle \zeta}{\zeta} + \triangle \phi \leq 0.
\end{equation}

Differentiating equation \eqref{appro} twice, by the concavity of $F$, we obtain at $x_{0}$,
\begin{equation}\label{3-4}
F^{ii} u_{ii11} + \gamma (\Delta u)_{11} \sum F^{ii} = \psi_{11} + \varepsilon (\eta' \psi_{11} + \eta'' \psi_1^2) \geq - C.
\end{equation}
Let
\[ \phi = \frac{\delta |D u|^2}{2}, \]
where $\delta > 0$ is a undetermined constant.
By straightforward calculation, we have
\[
\phi_i =\delta u_i u_{ii}
\]
and
\[
\phi_{ii}
     = \delta  u_{ii}^2 + \delta u_{j} u_{jii}.
\]
Note that
\begin{equation}
\label{ib-1}
F^{ii} u_{j} u_{jii} + \gamma u_{j} \Delta u_j \sum F^{ii} = u_j (\psi_j + \varepsilon \eta' \psi_j) \geq - C
\end{equation}
and
\begin{equation}\label{3-8}
\phi_i^2 \leq C \delta^2 u_{ii}^2.
\end{equation}
We have
\begin{equation}\label{3-9}
\begin{aligned}
\mathcal{L} \phi
     \geq \delta F^{ii} u_{ii}^2 + \gamma \delta \sum u_{jj}^2 \sum F^{ii} - C \delta.
\end{aligned}
\end{equation}
Combining \eqref{3-2}-\eqref{3-9}, we get
\begin{equation}
\label{3-10}
\begin{aligned}
0 \geq \,& - \frac{C}{u_{11}} - C \delta + (\delta - C \delta^2) F^{ii} u_{ii}^2\\
  & + \gamma (\delta - C \delta^2) \sum u_{jj}^2 \sum F^{ii}
   - \frac{C}{\zeta^2} \sum F^{ii}.
\end{aligned}
\end{equation}
Choose $\delta$ sufficiently small such that
$\delta - C \delta^2 > 0$.
Let $\lambda = \lambda (D^2 u (x_0) + \gamma \triangle u (x_0))$.
By \eqref{eq4-0-10}, we find that
\[
|\lambda|^2 = \sum U_{ii}^2 \leq 2 n^2 \Big(\max\{\gamma, 1\}\Big)^2 \sum u_{jj}^2.
\]
Thus, it follows from \eqref{3-10} that
\begin{equation}
\label{3-1000}
\begin{aligned}
0 \geq - \frac{C}{u_{11}} - C \delta
   + 2 c_3 |\lambda|^2 \sum F^{ii} - \frac{C}{\zeta^2} \sum F^{ii},
\end{aligned}
\end{equation}
where
\[
c_3 = \frac{1}{4} (\delta - C \delta^2) \gamma n^{-2} \Big(\max\{\gamma, 1\}\Big)^{-2} > 0.
\]
By \eqref{eq4-5} and \eqref{3-1000}, we have
\begin{equation}
\label{3-1001}
0 \geq \Big(b_0 c_3 |\lambda| - \frac{C}{u_{11}} - C \delta\Big)
   + \Big(c_3 |\lambda|^2 - C b^2 - \frac{C}{\zeta^2} \Big) \sum F^{ii}
\end{equation}
provided $|\lambda|$ is sufficiently large.
It follows that $|\lambda| \zeta (x_0) \leq C$.

The function $\zeta$ may now be chosen as a cutoff function satisfying
$\zeta \equiv 1$ on $\Omega' \subset\subset \Omega$ and $|D \zeta| \leq C/d'$,
$|D^2 \zeta| \leq C/d'^2$. Then
\[
|D^2 u| \zeta \leq C \mbox{ on } \bar{\Omega}
\]
and \eqref{3-1} holds.
\end{proof}
\begin{remark}
We remark that in the proof of Theorem \ref{jw-th2} we do not need
the existence of $\ul u$.
\end{remark}

In the proof of Theorem \ref{jw-th2}, setting $\zeta \equiv 1$, we can prove
the following maximal principle.
\begin{theorem}
\label{jiao-th1}
Let $u \in C^4 (\bar{\Omega})$ be an admissible solution
of \eqref{appro}. Then
\begin{equation}
\label{jiao-3-1}
\sup_{\bar{\Omega}} |D^2 u| \leq C (1 + \sup_{\partial \Omega} |D^2 u|),
\end{equation}
where $C$ depends on $\gamma^{-1}$,
$|u|_{C^1 (\bar{\Omega})}$
and other known data.
\end{theorem}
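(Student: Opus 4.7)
The plan is to run the argument of Theorem~\ref{jw-th2} on the whole of $\bar\Omega$ with no localization. I would consider the test function
\[
W = \max_{x \in \bar\Omega,\, |\xi|=1} e^{\phi(x)} D_{\xi\xi} u(x), \qquad \phi = \tfrac{\delta}{2}|Du|^2,
\]
with $\delta>0$ small, chosen exactly as before. Since the maximum is now over a compact set that includes $\partial\Omega$, it is attained either on the boundary or at an interior point. In the boundary case, the $C^1$ estimate \eqref{2-5} bounds $\phi$ uniformly, so immediately $W \leq C\sup_{\partial\Omega}|D^2 u|$, which already gives \eqref{jiao-3-1}. Hence the only substantive case is that of an interior maximum.

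Assume the maximum is attained at some $x_0 \in \Omega$. I would rotate coordinates so that $\xi_0 = e_1$ and $D^2u(x_0)$ is diagonal, and repeat the computation of Theorem~\ref{jw-th2} with $\zeta \equiv 1$: the criticality conditions \eqref{3-2}--\eqref{3-3-0} remain valid but with every occurrence of $\zeta_i/\zeta$, $\zeta_{ii}/\zeta$ and $\triangle\zeta/\zeta$ simply deleted, and the singular quantity $C/\zeta^2$ in \eqref{3-10} becomes a harmless constant $C$. The twice-differentiated equation \eqref{3-4}, combined with the concavity of $F$ and the computations \eqref{ib-1}--\eqref{3-9}, then produces the analogue of \eqref{3-1001},
\[
0 \geq \Big(b_0 c_3 |\lambda| - \tfrac{C}{u_{11}} - C\delta\Big) + \Big(c_3 |\lambda|^2 - C\Big)\sum F^{ii},
\]
where $\lambda = \lambda(D^2 u(x_0) + \gamma \triangle u(x_0) I)$, valid once $|\lambda|$ is sufficiently large. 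Splitting into the alternatives $|\lambda|\geq R$ and $|\lambda|\leq R$ exactly as in Section~3 (using \eqref{eq4-5} in the first and \eqref{gd-0} in the second to lower-bound $\sum F^{ii}$), this inequality forces $|\lambda(x_0)|\leq C$, hence $u_{11}(x_0)\leq C$ and thus $W\leq C$.

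Combining the two cases yields \eqref{jiao-3-1}. I do not expect a real obstacle: the cutoff $\zeta$ in Theorem~\ref{jw-th2} was used only to localize, and dropping it actually simplifies the final inequality by eliminating the singular $\zeta^{-2}$ contribution. The one point that needs explicit verification is that the case split between $|\lambda|$ bounded and $|\lambda|$ large still delivers a uniform lower bound on $\sum F^{ii}$ at the interior maximum, but since \eqref{eq4-5} and \eqref{gd-0} depend only on the structure of $f$ and on $|u|_{C^1(\bar\Omega)}$, both of which are already controlled, this step is routine.
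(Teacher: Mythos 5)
Your proposal is exactly the paper's intended argument (the paper itself says only ``In the proof of Theorem~\ref{jw-th2}, setting $\zeta\equiv 1$, we can prove the following maximal principle''), and you correctly supplied the missing detail that with $\zeta\equiv 1$ the maximum may be attained on $\partial\Omega$, in which case \eqref{jiao-3-1} is immediate. One tiny imprecision: with $\zeta\equiv 1$ the $C/\zeta^2\sum F^{ii}$ term in \eqref{3-10} does not merely ``become a constant $C$'' but vanishes outright (since $D\zeta=D^2\zeta=0$), though keeping a spurious $C\sum F^{ii}$ is harmless for the conclusion.
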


We are interested in the case that $\gamma = 0$. Now we prove \eqref{jiao-3-1}
under the existence of $\ul u$ satisfying \eqref{1-5}
and we will see that the constant $C$ would not depend on $\gamma^{-1}$ when
$\gamma$ is small.
\begin{theorem}
\label{jiao-th2}
Suppose \eqref{1-2}-\eqref{1-5} hold. Let $u \in C^4 (\bar{\Omega})$ be an admissible solution
of \eqref{appro}. Then we have
\begin{equation}
\label{jiao-3-1-1}
\sup_{\bar{\Omega}} |D^2 u| \leq C \max\{\gamma, 1\} (1 + \sup_{\partial \Omega} |D^2 u|),
\end{equation}
for some constant $C$ depending on
$|u|_{C^1 (\bar{\Omega})}$, $|\ul u|_{C^2 (\bar{\Omega})}$
and other known data.
In particular, if $0 \leq \gamma \leq 1$, \eqref{jiao-3-1} holds
for the constant $C$ depending on
$|u|_{C^1 (\bar{\Omega})}$, $|\ul u|_{C^2 (\bar{\Omega})}$
and other known data.
\end{theorem}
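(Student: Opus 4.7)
The plan is to run the argument of Theorem \ref{jw-th2} globally on $\bar\Omega$ (with no interior cutoff) but to replace $\phi=\delta|Du|^2/2$ by a test function incorporating the subsolution $\underline{u}$, and then invoke the dichotomy used in the gradient estimate of Section 3. Specifically, set
\[
\phi = \alpha(\underline{u}-u) + \tfrac{\delta}{2}|Du|^2,
\]
with positive constants $\alpha,\delta$ to be chosen, and let $W = \max_{x\in\bar{\Omega},\,|\xi|=1}e^{\phi(x)}D_{\xi\xi}u(x)$. Since $|\underline{u}-u|$ and $|Du|$ are bounded by \eqref{2-5}, the oscillation of $\phi$ is bounded by a constant independent of $\gamma$ (once we take $\alpha,\delta\le 1$). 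If $W$ is attained on $\partial\Omega$, the bound $|D^2u|\le C\sup_{\partial\Omega}|D^2u|$ follows immediately. Otherwise $W$ is attained at an interior $x_0$; after rotating so that $\xi_0=e_1$ and $D^2u(x_0)$ is diagonal, the task reduces to bounding $u_{11}(x_0)$.

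Repeating the computation of Theorem \ref{jw-th2} with $\zeta\equiv 1$ and the new $\phi$ --- using the critical point conditions for $\log u_{11}+\phi$, the twice differentiated equation \eqref{3-4}, concavity ($F^{ij,kl}\le 0$), the once differentiated identity \eqref{ib-1} to handle the $u_j u_{jii}$ terms arising from $\mathcal{L}(|Du|^2/2)$, and the crude bounds $\phi_i^2\le C\alpha^2+C\delta^2u_{ii}^2$ and $|D\phi|^2\le C\alpha^2+C\delta^2\sum_j u_{jj}^2$ --- and then choosing $\delta$ small enough to absorb the quadratic-in-$\delta$ terms into the positive left hand side yields the key inequality at $x_0$:
\[
\alpha\,\mathcal{L}(\underline{u}-u) + \tfrac{\delta}{2}F^{ii}u_{ii}^2 + \tfrac{\gamma\delta}{2}\sum_j u_{jj}^2\sum F^{ii}
\;\le\; \tfrac{C}{u_{11}} + C\delta + C\alpha^2(1+\gamma)\sum F^{ii}.
\]

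Now split into the dichotomy of \eqref{eq2-16} with $\mu=\lambda(\underline{U}(x_0))$ and $\lambda=\lambda(U(x_0))$. If $|\nu_\mu-\nu_\lambda|\ge\beta$, Lemma \ref{ma-lemma-C10} gives $\mathcal{L}(\underline{u}-u)\ge\theta(\sum F^{ii}+1)$; choosing $\alpha=\theta/(4C(1+\gamma))$ and $\delta$ of the same order, the $\sum F^{ii}$ term on the right absorbs into $\alpha\theta\sum F^{ii}$ on the left, while the ``$+1$'' in the lemma yields $\alpha\theta\le C/u_{11}+C\delta$, whence $u_{11}\le C(1+\gamma)$. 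If $|\nu_\mu-\nu_\lambda|<\beta$, then \eqref{eq2-22} gives $F^{ii}\ge(\beta/\sqrt{n})\sum F^{kk}$, so $F^{ii}u_{ii}^2\ge(\beta/\sqrt{n})u_{11}^2\sum F^{kk}$; using only $\mathcal{L}(\underline{u}-u)\ge 0$ from \eqref{eq0-1}, once $u_{11}$ exceeds a $\gamma$-uniform threshold the right hand $\sum F^{ii}$ is absorbed and one obtains $u_{11}^2\sum F^{ii}\le C$. Combining with $|\lambda|\sum F^{ii}\ge b_0$ for large $|\lambda|$ from \eqref{eq4-5} (or \eqref{gd-0} when $|\lambda|$ is small) and the elementary inequality $|\lambda|\le\sqrt{n}(1+n\gamma)u_{11}$ (which follows from admissibility and $|u_{ii}|\le u_{11}$) again gives $u_{11}\le C(1+\gamma)$.

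Hence in both cases $u_{11}(x_0)\le C\max\{\gamma,1\}$, and since $|D^2u|(x)\le e^{\mathrm{osc}\,\phi}u_{11}(x_0)$, combining with the boundary case yields \eqref{jiao-3-1-1}; when $0\le\gamma\le 1$, $\max\{\gamma,1\}=1$ and \eqref{jiao-3-1} follows. The main obstacle is Case 1 of the dichotomy: the bad term $C\alpha^2(1+\gamma)\sum F^{ii}$ on the right forces one to take $\alpha\sim 1/(1+\gamma)$ for large $\gamma$, and this $\gamma$-dependent scaling of $\alpha$ is precisely what produces the factor $\max\{\gamma,1\}$ in the final estimate.
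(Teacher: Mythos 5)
Your proposal is correct and follows essentially the same route as the paper: the test function is the same (up to renaming $\alpha\leftrightarrow b$), the dichotomy driven by $\beta$ and Lemma \ref{ma-lemma-C10} is identical, and the estimate \eqref{eq4-5} / \eqref{gd-0} is used in the same way. Two small remarks. First, in your Case B the intermediate bound is really $u_{11}^2\sum F^{ii}\le C(1+\gamma)$ rather than $\le C$ (because after absorbing the $\sum F^{ii}$ term you must divide by $\delta\sim(1+\gamma)^{-1}$); combined with $\sum F^{ii}\ge b_0/|\lambda|$ and $|\lambda|\le C(1+\gamma)u_{11}$ this still yields a quadratic inequality for $u_{11}$ whose solution is $O(1+\gamma)$, so the conclusion is unaffected. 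Second, your closing claim that the $\max\{\gamma,1\}$ factor comes ``precisely'' from the $\alpha\sim 1/(1+\gamma)$ scaling in Case A is too narrow: it also enters through Case B via the conversion $|\lambda|\le C(1+\gamma)u_{11}$ (which is the content of \eqref{eq4-0-10}, the mechanism the paper emphasizes in its Case (i)). You are, on the other hand, more careful than the paper's displayed \eqref{ib-3}, which writes $Cb^2\sum F^{ii}$ where the term $\gamma|D\phi|^2\sum F^{ii}$ actually produces $Cb^2(1+\gamma)\sum F^{ii}$; your version keeps this and still reaches the stated bound.
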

\begin{proof}
In the proof of Theorem \ref{jw-th2}, let $\zeta \equiv 1$ and $\phi = \frac{\delta}{2} |D u|^2 + b (\ul u - u)$,
where $\delta$ and $b$ are positive constants to be chosen. Note that
\[
\phi_i = \delta u_i u_{ii} + b (\ul u - u)_i
\]
and
\[
\phi_{ii} = \delta u^2_{ii} + \delta u_j u_{jii} + b (\ul u - u)_{ii}.
\]
We have
\begin{equation}
\label{ib-2}
\phi_i^2 \leq C \delta^2 u_{ii}^2 + C b^2.
\end{equation}
Therefore, by \eqref{ib-1},
\begin{equation}
\label{ib-3-9}
\begin{aligned}
\mathcal{L} \phi
     \geq \delta F^{ii} u_{ii}^2 + \gamma \delta \sum u_{jj}^2 \sum F^{ii} - C \delta
        + b \mathcal{L} (\ul u - u).
\end{aligned}
\end{equation}
We can derive from \eqref{3-2}-\eqref{3-4} and \eqref{ib-2} that
\begin{equation}
\label{ib-3}
\begin{aligned}
\mathcal{L} \phi
     \leq \frac{C}{u_{11}} + C \delta^2 (F^{ii} u_{ii}^2 + \gamma \sum u_{jj}^2 \sum F^{ii})
        + C b^2 \sum F^{ii}.
\end{aligned}
\end{equation}
Combining \eqref{ib-3-9} and \eqref{ib-3}, we obtain
\begin{equation}
\label{ib-4}
(\delta - C \delta^2) (F^{ii} u_{ii}^2 + \gamma \sum u_{jj}^2 \sum F^{ii})
   + b \mathcal{L} (\ul u - u) \leq C \delta + \frac{C}{u_{11}} + C b^2 \sum F^{ii}.
\end{equation}
We may assume that $\delta$ is sufficiently small such that $(\delta - C \delta^2) > \delta/2$.
let ${\mu} = \lambda (D^2 \ul u (x_0) + \gamma \triangle \ul u (x_0))$,
$\lambda = \lambda (D^2 u (x_0) + \gamma \triangle u (x_0))$.
As in the gradient estimates, we consider two cases:
(i) $|\nu_\mu- \nu_\lambda| < \beta$ and (ii) $|\nu_\mu- \nu_\lambda| \geq \beta$,
where $\beta$ is as in \eqref{eq2-16}.

In case (i), we see that \eqref{eq2-22} holds. Thus, by \eqref{eq4-0-10}, \eqref{ib-4}, \eqref{eq2-22}
and \eqref{eq0-1}, we have
\begin{equation}
\label{ib-5}
(2 n \max \{\gamma, 1\})^{-1}\frac{\delta \beta}{2 \sqrt{n}} |\lambda|^2 \sum F^{ii}
    \leq C \delta + \frac{C}{u_{11}} + C b^2 \sum F^{ii}.
\end{equation}
We may assume $|\lambda| \geq R$ for $R$ sufficiently large such that \eqref{eq4-5} holds.
Therefore, by \eqref{eq4-5} and \eqref{ib-5}, we have
\begin{equation}
\label{ib-6}
(2 n \max \{\gamma, 1\})^{-1} \Big(\frac{\delta \beta}{4 \sqrt{n}} |\lambda|^2 \sum F^{ii}
   + \frac{\delta \beta b_0}{4 \sqrt{n}} |\lambda|\Big)
    \leq C \delta + \frac{C}{u_{11}} + C b^2 \sum F^{ii}.
\end{equation}
It follows that
\[
|\lambda| \leq \frac{8 n \sqrt{n} \max \{\gamma, 1\}}{\delta \beta}
    \max \Big\{C b, \frac{C \delta}{b_0} + \frac{C}{R b_0}\Big\}.
\]
Note that we do not determine $\delta$ and $b$ right now.

In case (ii), we can choose $b$ sufficiently small such that $\theta b > C b^2$,
where $\theta$ is as in \eqref{gj-C160'}. We can choose such $b$ and a smaller $\delta$
such that $\theta b > C \delta$. Applying Lemma \ref{ma-lemma-C10}, we can derive
from \eqref{ib-4} that
\[
u_{11} \leq \frac{C}{\theta b - C \delta}
\]
and \eqref{jiao-3-1-1} holds.
\end{proof}

\medskip

\section{Boundary estimates for second derivatives}

In this section we consider the estimates for the second order derivatives on
the boundary $\partial \Omega$. As usual, the construction of barrier functions
plays a key role. For any fixed $x_0 \in \Omega$, we may assume that $x_0$ is the
origin of $\mathbb{R}^n$ with the positive $x_n$ axis in the interior normal
direction to $\partial \Omega$ at the origin. Let $d (x)$
be the distances from $x \in \bar{\Omega}$ to $\partial \Omega$,
and set
\[
\Omega_\delta = \{x \in \Omega: |x| < \delta\}.
\]
Suppose near the origin, the boundary $\partial \Omega$
is represented by
\begin{equation}\label{4-6}
x_{n}=\rho(x')=\frac{1}{2}\sum_{\alpha,\beta<n}B_{\alpha\beta}x_{\alpha}x_{\beta}+O(|x'|^{3})
\end{equation}
for some $C^{\infty}$ smooth function $\rho$, where $x'=(x_{1},...,x_{n-1})$.
For $x\in\partial\Omega$ near the origin, let
\[
T_\alpha = T_\alpha(x)=\partial_{\alpha}
  +\sum_{\beta<n}B_{\alpha\beta}(x_{\beta}\partial_{n}-x_{n}\partial_{\beta}), \;\;
  \mbox{ for } \alpha < n
\]
and $T_n = \partial_n$. We have (see \cite{CNS3})
\[
\mathcal{L} T_\alpha u = T_\alpha (\psi + \varepsilon \eta (\psi)).
\]
It follows that
\begin{equation}
\label{bd-2}
|\mathcal{L} T_\alpha (u - \varphi)| \leq C \Big(1 + \sum F^{ii}\Big)
\end{equation}
and
\begin{equation}
\label{bd-3}
|T_\alpha (u - \varphi)| \leq C |x|^2 \mbox{ on } \partial \Omega \cap \bar{\Omega}_\delta
    \mbox{ for } \alpha < n
\end{equation}
when $\delta$ is sufficiently small since $u = \varphi$ on $\partial \Omega$.

To proceed we choose smooth unit orthonormal vector fields $e_1, \ldots, e_{n}$ in $\Omega_\delta$ such that
when restricted to $\partial \Omega$, $e_1, \ldots, e_{n - 1}$ are tangential
and $e_n$ is normal to $\partial \Omega$. Let $e_i (x) = (\xi^i_1 (x), \ldots, \xi^i_n (x))$,
$\nabla_i u = \xi^i_k D_k u$, $\nabla_{ij} u = \xi^i_l \xi^j_k D_{kl} u$
and $\nabla^2 u = \{\nabla_{ij} u\}$ in $\Omega_\delta$.
We may assume $\xi^i_j (0) = \delta_{ij}$. In particular, $\lambda (D^2 u) = \lambda (\nabla^2 u)$
and
\[
f (\lambda (\nabla^2 u + \gamma \triangle u I)) = f (\lambda (D^2 u + \gamma \triangle u I)).
\]
By straightforward calculations, we have
\begin{equation}
\label{bs-1}
|\mathcal{L} \nabla_k (u - \varphi)| \leq C \Big(1 + \sum F^{ii} + \sum f_i |\widehat{\lambda}_i|
  + \gamma |\widehat{\lambda}| \sum F^{ii}\Big),
\end{equation}
where $\widehat{\lambda} = \lambda (D^2 u) = \lambda (\nabla^2 u)$.
Let $\widehat{F}^{ij} = \xi^i_l \xi^j_k F^{kl}$. We see that $\{\widehat{F}^{ij}\}$ is positive
definite with eigenvalues $f_1, \ldots, f_n$ and when $\nabla^2 u$ is diagonal so is $\{\widehat{F}^{ij}\}$.

We shall use the following barrier function
\begin{equation}
\label{bd-0}
\Psi = \frac{1}{\delta^2} \Big(A_1 (u - \ul u) + t d - \frac{N}{2} d^2
    + A_3 |x|^2\Big) - A_2 (u - \ul u) - A_4 \sum_{l < n} |\nabla_l (u - \varphi)|^2,
\end{equation}
where $A_1$, $A_2$, $A_3$, $A_4$, $t$ and $N$ are positive constants satisfying $A_1 > 2 A_2$.
\begin{theorem}
\label{barrier}
Suppose that \eqref{1-2}-\eqref{1-5} hold. Let $h \in C
(\bar{\Omega}_{\delta})$ satisfy $h \leq \overline{C} |x|^2$ on $\bar{\Omega}_{\delta}
\cap \partial \Omega$ and $h \leq \overline{C}$ on $\bar{\Omega}_{\delta}$. Then for
any positive constant $K$ there exist uniform positive constants $t,
\delta$ sufficiently small, and $A_1$, $A_2$, $A_3$, $N$
sufficiently large such that $\Psi \geq h$ on $\partial
\Omega_{\delta}$ and
\begin{equation}
\label{eq3-5}
\mathcal{L} \Psi \leq - K
\Big(1 + \sum F^{ii}\Big)  \;\; \mbox{in $\Omega_{\delta}$}.
\end{equation}
\end{theorem}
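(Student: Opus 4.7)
The plan is to verify $\Psi \geq h$ on $\partial \Omega_\delta$ directly from the three building blocks of $\Psi$, then estimate $\mathcal{L}\Psi$ in $\Omega_\delta$ and close the pointwise inequality by combining the dichotomy furnished by Lemma~\ref{ma-lemma-C10} with a careful, ordered choice of the constants $A_1,\ldots,A_4,N,t,\delta$. On $\partial\Omega_\delta$ there are two pieces. Since $e_1,\dots,e_{n-1}$ are tangential to $\partial\Omega$ and $u = \varphi$ there, every $\nabla_l(u-\varphi)$ with $l < n$ vanishes on $\partial\Omega$, and both $u-\ul u$ and $d$ vanish; hence $\Psi$ collapses to $A_3 |x|^2/\delta^2$ on $\partial\Omega \cap \bar\Omega_\delta$, and the hypothesis $h \leq \overline{C}|x|^2$ gives $\Psi \geq h$ once $A_3$ is large. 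On $\{|x|=\delta\} \cap \bar\Omega$ the same term equals $A_3$, which dominates the (bounded) remaining contributions provided $A_3$ is chosen \emph{last}, large compared with $A_4, N, t$ and the $C^0$-data.

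For the interior inequality, using $\mathcal{L}|x|^2 = 2(1+n\gamma)\sum F^{ii}$, $|\mathcal{L}d| \leq C\sum F^{ii}$ on $\Omega_\delta$, $\mathcal{L}(d^2) = 2F^{ij}d_i d_j + 2\gamma|\nabla d|^2 \sum F^{ii} + 2d\,\mathcal{L}d$, and the analogous expansion of $\mathcal{L}(w_l^2)$ for $w_l := \nabla_l(u-\varphi)$ together with \eqref{bs-1} and the $C^1$-bound on $w_l$, the computation reduces to
\[
\mathcal{L}\Psi \leq \Big(A_2 - \frac{A_1}{\delta^2}\Big)\mathcal{L}(\ul u - u) - \frac{N}{\delta^2}F^{ij}d_i d_j - \frac{N\gamma}{\delta^2}\sum F^{ii} - 2A_4 \sum_{l<n} F^{ij}(w_l)_i (w_l)_j + \mathcal{E},
\]
where $\mathcal{E}$ collects bounded multiples of $1+\sum F^{ii}$ from the lower-order pieces \emph{and} the a priori uncontrolled terms $CA_4 \sum f_i|\widehat{\lambda}_i|$ and $CA_4 \gamma |\widehat{\lambda}| \sum F^{ii}$ coming from \eqref{bs-1}. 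By \eqref{eq0-1}, $\mathcal{L}(\ul u - u) \geq 0$, so the first displayed term is nonpositive as soon as $A_1 > 2A_2$ and $\delta < 1$.

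The closure of the inequality rests on the dichotomy of Lemma~\ref{ma-lemma-C10}. If $|\nu_\mu - \nu_\lambda|\geq \beta$, the Lemma yields $\mathcal{L}(\ul u - u) \geq \theta(1+\sum F^{ii})$, so the first displayed term alone dominates $\mathcal{E}$ and all other contributions once $A_1$ is taken large. If $|\nu_\mu - \nu_\lambda| < \beta$, then \eqref{eq2-22} holds, and since $|\nabla d| = 1$ with $\nabla d$ close to the inward unit normal throughout $\Omega_\delta$ for $\delta$ small, $F^{ij}d_i d_j \geq \frac{\beta}{2\sqrt{n}} \sum F^{kk}$; the $-\frac{N}{\delta^2}F^{ij}d_i d_j$ term then dominates for $N$ large. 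The main obstacle is absorbing the dangerous pieces of $\mathcal{E}$: diagonalizing $D^2 u$ at the point one obtains
\[
\sum_{l<n} F^{ij}(w_l)_i (w_l)_j \geq \tfrac{1}{2} \sum_{l<n} F^{ll}\widehat{\lambda}_l^2 - C \sum F^{ii},
\]
so the Cauchy-Schwarz inequality $\sum f_i |\widehat{\lambda}_i| \leq \eta \sum f_i \widehat{\lambda}_i^2 + \eta^{-1} \sum f_i$ allows the term $-2A_4\sum_{l<n} F^{ij}(w_l)_i(w_l)_j$ to swallow the tangential part of the dangerous contributions; the remaining normal piece $F^{nn}\widehat{\lambda}_n^2$ together with $\gamma|\widehat{\lambda}|\sum F^{ii}$ is absorbed either by the $\mathcal{L}(\ul u - u)$ term in case~(i) or by the $F^{ij}d_i d_j$ term in case~(ii). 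The constants are therefore fixed in the order: $A_4$ first, then $N$, then $A_1$, then $A_2 < A_1/2$, then $t,\delta$ small, and finally $A_3$ large to secure the boundary inequality.
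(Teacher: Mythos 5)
Your plan for the boundary inequality is fine, and the overall architecture (dichotomy via Lemma~\ref{ma-lemma-C10}, using $\frac{A_1}{\delta^2}(u-\ul u)$ and $td-\frac{N}{2}d^2$ as the main "good" terms, harvesting $\sum_{l<n} F^{ij}(\nabla_l(u-\varphi))_i(\nabla_l(u-\varphi))_j$ to control the tangential contributions) matches the paper's strategy. But the interior estimate as you have sketched it does not close, for two concrete reasons.

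First, the uncontrolled terms in $\mathcal{E}$ are of order $CA_4\bigl(\sum f_i|\widehat{\lambda}_i| + \gamma|\widehat{\lambda}|\sum F^{ii}\bigr)$, which can grow like $|\widehat{\lambda}|\sum F^{ii}$, whereas both $\mathcal{L}(\ul u - u)\geq\theta(1+\sum F^{ii})$ and $\frac{N}{\delta^2}F^{ij}d_id_j$ are only of order $\sum F^{ii}$. Neither can absorb the dangerous pieces; choosing $A_1$ or $N$ large multiplies a fixed quantity but does not make it compete with $|\widehat{\lambda}|$. The remedy in the paper is not a Cauchy--Schwarz for the "lost" index. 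The inequality $\sum_{l<n}\widehat{F}^{ij}\nabla_{il}u\nabla_{jl}u\geq\tfrac12\sum_{i\neq r}f_i\widehat{\lambda}_i^2$ gives up one \emph{a priori unknown} index $r$ (it need not be the normal direction, since the tangent/normal frame does not diagonalize $\nabla^2 u$ in general), and the term $f_r|\widehat{\lambda}_r|$ simply cannot be swallowed by any quadratic good term. Instead the paper splits on $\operatorname{sgn}\widehat{\lambda}_r$: when $\widehat{\lambda}_r>0$, the identity $\mathcal{L}u=\sum f_i\widehat{\lambda}_i+\gamma\sum\widehat{\lambda}_i\sum F^{ii}$ yields $+f_r\widehat{\lambda}_r+\gamma\widehat{\lambda}_r\sum F^{ii}$ with the \emph{good} sign, so $A_2\mathcal{L}(u-\ul u)$ contributes a positive multiple of this and absorbs $CA_4\sum f_i|\widehat{\lambda}_i|+CA_4\gamma|\widehat{\lambda}|\sum F^{ii}$ once $A_2\gg A_4$ (this is where the term $-A_2(u-\ul u)$ in $\Psi$ does its real work, beyond providing $\mathcal{L}(\ul u - u)\geq 0$); when $\widehat{\lambda}_r\leq 0$, the fact that $\triangle u>0$ forces $\sum_{i\neq r}\widehat{\lambda}_i^2\gtrsim|\lambda|^2$, and the quadratic good term combined with \eqref{eq2-22} then dominates. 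Your proposal contains neither the $\operatorname{sgn}\widehat{\lambda}_r$ split nor the mechanism by which $A_2\mathcal{L}u$ is used, and the claim that the "normal piece" is absorbed by $\mathcal{L}(\ul u - u)$ or $F^{ij}d_id_j$ is therefore unjustified.

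Second, the constant-ordering fails: you propose fixing $A_3$ \emph{last}, but $\mathcal{L}(A_3|x|^2/\delta^2)$ contributes $+\frac{CA_3}{\delta^2}\sum F^{ii}$, which must be dominated by the $-\frac{N\beta}{2\sqrt n\,\delta^2}\sum f_i$ (or the $A_1\theta/\delta^2$) term. Hence $N$ has to be chosen \emph{after} $A_3$, not before. The workable order is: fix $A_3>A_4$ large depending only on data and $A_4$ (so that the boundary inequality holds on $|x|=\delta$), then $A_2\gg A_4$, then $N$ large depending on $A_2,A_3,A_4$, then $t,\delta$ small, then $A_1$.
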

\begin{proof}
Let $v = t d - \frac{N d^2}{2}$. Firstly, we note that
\begin{equation}
\label{bd-1}
\begin{aligned}
\mathcal{L} v = \,& (t - N d) F^{ij} (d_{ij} + \gamma \triangle d \delta_{ij})
   - N F^{ij} (d_i d_j + \gamma |D d|^2 \delta_{ij})\\
      \leq \,& C_0 (t + N d) \sum F^{ii}
        - N F^{ij} d_i d_j - \gamma N \sum F^{ii}
\end{aligned}
\end{equation}
since $|D d| \equiv 1$. Similar to Proposition 2.19 in \cite{Guan14a}, we have
\begin{equation}
\label{bs-2}
\widehat{F}^{ij} \nabla_{il} u \nabla_{jl} u \geq \frac{1}{2} \sum_{i \neq r} f_i \widehat{\lambda}_i^2
\end{equation}
and
\begin{equation}
\label{bs-6}
 \sum _{l < n} \sum_{k=1}^{n} (\nabla_{lk} u)^2 \geq \frac{1}{2} \sum_{i \neq r} \widehat{\lambda}_i^2
\end{equation}
for some index $r$.
It follows from \eqref{bs-1}, \eqref{bs-2} and \eqref{bs-6} that
\begin{equation}
\label{bd-5}
\begin{aligned}
\sum_{l < n} \mathcal{L} |\nabla_l (u - \varphi)|^2
    = \,& 2 \sum_{l < n} F^{ij} \Big((\nabla_l (u - \varphi))_i (\nabla_l (u - \varphi))_j \\
        & + \sum_{l < n} \gamma |D \nabla_l (u - \varphi)|^2 \delta_{ij}\Big)
        + 2 \nabla_l (u - \varphi) \mathcal{L} \nabla_l (u - \varphi) \\
    \geq \,&  
          - C \Big(1 + \sum F^{ii} + \sum f_i |\widehat{\lambda}_i|
  + \gamma |\widehat{\lambda}| \sum F^{ii}\Big)\\
      & + \sum_{l < n} \widehat{F}^{ij} \nabla_{il} u \nabla_{jl} u
          + \frac{\gamma}{2} \sum _{l < n} \sum_{k=1}^{n} (\nabla_{lk} u)^2 \sum F^{ii}\\
         \geq \,& \frac{1}{2}\sum_{i \neq r} f_i \widehat{\lambda}_i^2
            + \frac{\gamma}{4} \sum_{i \neq r} \widehat{\lambda}_i^2 \sum f_i\\
     & - C \Big(1 + \sum F^{ii} + \sum f_i |\widehat{\lambda}_i|
  + \gamma |\widehat{\lambda}| \sum F^{ii}\Big).
\end{aligned}
\end{equation}

For any $x \in \Omega_\delta$, let $\mu = \lambda (D^2 \ul u (x) + \gamma \triangle \ul u (x) I)$,
$\lambda = \lambda (D^2 u (x) + \gamma \triangle u (x) I)$ and $\beta$ be as in \eqref{eq2-16}.
We consider two cases: (i) $|\nu_\mu- \nu_\lambda| < \beta$ and (ii) $|\nu_\mu- \nu_\lambda| \geq \beta$.

In case (i), we see that \eqref{eq2-22} holds. It follows that
\begin{equation}
\label{bs-12}
\sum_{i \neq r} f_i \widehat{\lambda}_i^2 \geq \frac{\beta}{\sqrt{n}} \sum_{i \neq r} \widehat{\lambda}_i^2 \sum f_i
\end{equation}
and
\begin{equation}
\label{bs-4}
\mathcal{L} v \leq - \frac{N \beta}{2 \sqrt{n}} \sum f_i
\end{equation}
provided $t$ and $\delta$ are sufficiently small since $D d \equiv 1$.

We first assume $|\lambda| \geq R$
for $R$ sufficiently large.
If $\widehat{\lambda}_r \leq 0$, we have $\sum_{i \neq r} \widehat{\lambda}_i > - \widehat{\lambda}_r$ since
$\triangle u > 0$. It follows that
\[
\sum_{i \neq r} \widehat{\lambda}_i^2 \geq c_0 \widehat{\lambda}_r^2
\]
for some unform constant $c_0 > 0$. Therefore, by \eqref{eq4-0-10}, there exists unform
positive constants $c_1$ and $c_2$ such that
\begin{equation}
\label{bd-6}
\sum_{i \neq r} \widehat{\lambda}_i^2 \geq c_1 \sum \widehat{\lambda}_i^2 \geq c_2 |\lambda|^2.
\end{equation}
Combining \eqref{eq2-22}, \eqref{eq0-1}, \eqref{eq4-5}, \eqref{bd-5}, \eqref{bs-4} and \eqref{bd-6},
\begin{equation}
\label{bs-5}
\begin{aligned}
\mathcal{L} \Psi \leq \,& - \frac{N \beta}{2 \sqrt{n} \delta^2} \sum f_i + \frac{C A_3}{\delta^2} \sum f_i
    - \frac{\beta c_2}{2 \sqrt{n}} A_4 |\lambda|^2 \sum f_i\\
      & + C A_4 \Big(1 + \sum f_i + |\widehat{\lambda}| \sum f_i \Big)\\
      \leq \,& - \frac{N \beta}{2 \sqrt{n} \delta^2} \sum f_i - \frac{\beta c_2 b_0}{4 \sqrt{n}} A_4 |\lambda|
         + \frac{C A_3}{\delta^2} \sum f_i + C A_4 \Big(1 + \sum f_i\Big)\\
            \leq \,& \Big(- \frac{N \beta}{2 \sqrt{n} \delta^2} + \frac{C A_3}{\delta^2} + C A_4\Big)\sum f_i
               - A_4
\end{aligned}
\end{equation}
provided $|\lambda| > R$ and $R$ is sufficiently large.

If $\widehat{\lambda}_r > 0$, we have
\begin{equation}
\label{bd-8}
\begin{aligned}
\mathcal{L} u = \,& F^{ij} u_{ij} + \gamma \triangle u \sum F^{ii}
   = \sum f_i \widehat{\lambda}_i + \gamma \sum \widehat{\lambda}_i \sum F^{ii}\\
    \geq \,& - \sum_{i \neq r} f_i |\widehat{\lambda}_i| - \gamma \sum_{i \neq r} |\widehat{\lambda}_i| \sum F^{ii}
    + f_r \widehat{\lambda}_r + \gamma \widehat{\lambda}_r \sum F^{ii}.
\end{aligned}
\end{equation}
Note that for each $\sigma > 0$ and each $1 \leq i \leq n$,
\begin{equation}
\label{bs-8}
\widehat{\lambda}_i^2 \geq 2 \sigma |\widehat{\lambda}_i| - \sigma^2.
\end{equation}
It follows that
\begin{equation}
\label{bs-18}
\frac{A_4}{2}\sum_{i \neq r} f_i \widehat{\lambda}_i^2 \geq 2 A_2 \sum_{i \neq r} f_i |\widehat{\lambda}_i|
    - \frac{2 A_2^2}{A_4} \sum_{i \neq r} f_i
\end{equation}
by letting $\sigma = 2 A_2/A_4$ and that
\begin{equation}
\label{bs-22}
\frac{A_4}{4}\sum_{i \neq r} \widehat{\lambda}_i^2 \geq 2 A_2 \sum_{i \neq r} |\widehat{\lambda}_i|
   - \frac{4 A_2^2}{A_4}
\end{equation}
by letting $\sigma = 4 A_2/A_4$. Therefore, by \eqref{bd-5}, \eqref{bd-8},
\eqref{bs-18} and \eqref{bs-22}, we find
\begin{equation}
\label{bs-3}
\begin{aligned}
& \mathcal{L} \Big(A_2 (u - \ul u) + A_4 \sum_{l < n} |\nabla_l (u - \varphi)|^2\Big)\\
  \geq \,& A_4 \Big(\frac{1}{2}\sum_{i \neq r} f_i \widehat{\lambda}_i^2
      + \frac{\gamma}{4} \sum_{i \neq r} \widehat{\lambda}_i^2 \sum f_i\Big) - C A_2 \sum f_i\\
        & + A_2 (f_r \widehat{\lambda}_r + \gamma \widehat{\lambda}_r \sum f_i)
          - A_2 \Big(\sum_{i \neq r} f_i |\widehat{\lambda}_i| + \gamma \sum_{i \neq r} |\widehat{\lambda}_i| \sum f_i \Big)\\
    & - C A_4 \Big(1 + \sum F^{ii} + \sum f_i |\widehat{\lambda}_i|
  + \gamma |\widehat{\lambda}| \sum f_i\Big) \\
  \geq \,& (A_2 - C A_4) \Big(\sum f_i |\widehat{\lambda}_i| + \gamma |\widehat{\lambda}| \sum f_i\Big)
     - C A_4 \Big(1 + \sum f_i\Big)\\
       & - \Big(C A_2 + \frac{2 A_2^2}{A_4} + \frac{4 A_2^2}{A_4} \gamma\Big) \sum f_i.
\end{aligned}
\end{equation}
Therefore, by \eqref{eq2-22} and \eqref{eq4-5}, we have
\begin{equation}
\label{bs-7}
\begin{aligned}
\mathcal{L} \Psi \leq \,& - \frac{N \beta}{2 \sqrt{n} \delta^2} \sum f_i
  - (A_2 - C A_4) \Big(\sum f_i |\widehat{\lambda}_i| + \gamma |\widehat{\lambda}| \sum f_i\Big)\\
   & + C \Big(A_2 + \frac{A_2^2}{A_4} + \frac{A_3}{\delta^2}\Big) \sum f_i
         + C A_4 \Big(1 + \sum f_i\Big)\\
      \leq \,& - \frac{N \beta}{2 \sqrt{n} \delta^2} \sum f_i
          - \frac{\beta (A_2 - C A_4)}{\sqrt{n}} |\lambda| \sum f_i \\
         & + C \Big(A_2 + \frac{A_2^2}{A_4} + \frac{A_3}{\delta^2}\Big) \sum f_i
             + C A_4 \Big(1 + \sum f_i\Big)\\
\leq \,& - \frac{N \beta}{2 \sqrt{n} \delta^2} \sum f_i
          - \frac{\beta (A_2 - C A_4) b_0}{\sqrt{n}} \\
         & + C \Big(A_2 + \frac{A_2^2}{A_4} + \frac{A_3}{\delta^2}\Big) \sum f_i
             + C A_4 \Big(1 + \sum f_i\Big).
\end{aligned}
\end{equation}

Now we assume $|\lambda| \leq R$. We see that \eqref{gd-0} holds. Thus, by \eqref{bs-4}
and \eqref{bd-5}, we have
\begin{equation}
\label{bs-9}
\begin{aligned}
\mathcal{L} \Psi \leq \,& - \frac{N \beta}{2 \sqrt{n}\delta^2} \sum f_i + C \Big(A_2 + \frac{A_3}{\delta^2} + A_4\Big) \Big(1 + \sum f_i\Big)\\
 \leq \,& - \frac{N \beta}{4 \sqrt{n}\delta^2} \sum f_i - \frac{N \beta \delta_0}{4 \sqrt{n}\delta^2}
   + C \Big(A_2 + \frac{A_3}{\delta^2} + A_4\Big) \Big(1 + \sum f_i\Big).
\end{aligned}
\end{equation}

Now we fix $A_3 > A_4 > K$ such that
\begin{equation}
\label{bs-15}
A_3 > A_4 \sup_{\bar{\Omega}} \sum_{l < n} |\nabla_l (u - \varphi)|^2 + \overline{C}.
\end{equation}

In case (ii), we see from Lemma \ref{ma-lemma-C10} that \eqref{eq0-0} holds.
We deal with two cases as before: $\widehat{\lambda}_r > 0$
and $\widehat{\lambda}_r \leq 0$.

If $\widehat{\lambda}_r > 0$, similar to \eqref{bs-7}, we obtain
\begin{equation}
\label{bs-14}
\begin{aligned}
\mathcal{L} \Psi
  \leq \,& - \theta \frac{A_1}{\delta^2} \Big(1 + \sum f_i\Big)
    + C_0 (t + N d) \sum f_i\\
      & + \frac{C A_3}{\delta^2} \sum f_i + C A_4 \Big(1 + \sum f_i\Big)
        + C \Big(A_2 + \frac{A_2^2}{A_4}\Big) \sum f_i\\
         & - (A_2 - C A_4) \Big(\sum f_i |\widehat{\lambda}_i| + \gamma |\widehat{\lambda}| \sum f_i\Big).
\end{aligned}
\end{equation}

If $\widehat{\lambda}_r \leq 0$, similar to \eqref{bd-8},
\begin{equation}
\label{bs-10}
- \mathcal{L} u
    \geq - \sum_{i \neq r} f_i |\widehat{\lambda}_i| - \gamma \sum_{i \neq r} |\widehat{\lambda}_i| \sum F^{ii}
    - f_r \widehat{\lambda}_r - \gamma \widehat{\lambda}_r \sum F^{ii}.
\end{equation}
Similar to \eqref{bs-3}, we have, for any $B > 0$,
\begin{equation}
\label{bs-11}
\begin{aligned}
& \mathcal{L} \Big(- B (u - \ul u) + A_4 \sum_{l < n} |\nabla_l (u - \varphi)|^2\Big)\\
  \geq \,& (B - C A_4) \Big(\sum f_i |\widehat{\lambda}_i| + \gamma |\widehat{\lambda}| \sum f_i\Big)
     - C A_4 \Big(1 + \sum f_i\Big)\\
       & - \Big(C B + \frac{2 B^2}{A_4} + \frac{4 B^2}{A_4} \gamma\Big) \sum f_i.
\end{aligned}
\end{equation}
Thus, choosing $B = A_2$, we can see from \eqref{eq0-0}, \eqref{bd-1} and \eqref{bs-11} that
\begin{equation}
\label{bs-21}
\begin{aligned}
\mathcal{L} \Psi
  \leq \,& - \theta \Big(\frac{A_1}{\delta^2} - 2 A_2\Big)\Big(1 + \sum f_i\Big)
    + C_0 (t + N d) \sum f_i\\
      & + \frac{C A_3}{\delta^2} \sum f_i + C A_4 \Big(1 + \sum f_i\Big)
        + C \Big(A_2 + \frac{A_2^2}{A_4}\Big) \sum f_i\\
         & - (A_2 - C A_4) \Big(\sum f_i |\widehat{\lambda}_i| + \gamma |\widehat{\lambda}| \sum f_i\Big).
\end{aligned}
\end{equation}

Now we choose $A_2 \gg A_4$ such that
\begin{equation}
\label{bs-16}
\frac{\beta (A_2 - C A_4) b_0}{\sqrt{n}} - C A_4 > K
\end{equation}
in \eqref{bs-7} and $A_2 - C A_4 > 0$ in \eqref{bs-14} and \eqref{bs-21}.
Next, we fix $N$ sufficiently large such that
\begin{equation}
\label{bs-17}
\frac{N \beta}{2 \sqrt{n} \delta^2} - \frac{C A_3}{\delta^2} - C A_4 > K
\end{equation}
in \eqref{bs-5},
\begin{equation}
\label{bs-19}
\frac{N \beta}{2 \sqrt{n} \delta^2} - C \Big(A_2 + \frac{A_2^2}{A_4} + \frac{A_3}{\delta^2}\Big) - C A_4 > K
\end{equation}
in \eqref{bs-7} and
\begin{equation}
\label{bs-20}
\frac{N \beta}{4 \sqrt{n}\delta^2} \min\{1, \delta_0\} - C \Big(A_2 + \frac{A_3}{\delta^2} + A_4\Big) > K
\end{equation}
in \eqref{bs-9}.

We may assume that $t$ and $\delta$ is sufficiently small such that $\theta A_1/\delta^2 > 2 C_0 (t + Nd)$.
Therefore, in case (ii), by \eqref{bs-14} and \eqref{bs-21}, we have
\begin{equation}
\label{bs-13}
\begin{aligned}
\mathcal{L} \Psi
  \leq \,& \Big(- \frac{A_1 \theta}{2 \delta^2} + \frac{C A_3}{\delta^2}
      + C A_2 + C \frac{A_2^2}{A_4} + C A_4 \Big) \Big(1 + \sum f_i\Big).
\end{aligned}
\end{equation}
Finally, we may choose $A_1$ large enough to obtain \eqref{eq3-5}.
Furthermore, $v \geq 0$ in $\Omega_\delta$ when $\delta \leq 2t/N$. Therefore,
we can ensure $\Psi \geq h$ on $\partial
\Omega_{\delta}$.
\end{proof}

Now we are ready to establish the boundary estimates for second order derivatives.
Firstly, it is easy to obtain a bound independent of $\varepsilon$ for pure
tangential second order derivatives on the boundary
\begin{equation}\label{jw-1}
|u_{\xi \eta}|_{C^0 (\partial \Omega)} \leq C
\end{equation}
from the boundary condition in \eqref{1-1}, where $\xi$ and $\eta$
are unit tangential vector fields on $\partial\Omega$.

For the estimates of mixed second order derivatives, we see
from \eqref{bd-2}, \eqref{bd-3} and \eqref{eq3-5} that
\[
\mathcal{L} (\Psi \pm T_\alpha (u - \varphi)) \leq 0 \mbox{  in } \Omega_\delta
\]
and
\[
\Psi \pm T_\alpha (u - \varphi) \geq 0 \mbox{  on } \partial \Omega_\delta
\]
for $\alpha < n$. It follows that
\begin{equation}\label{jw-2}
|u_{\xi \nu}|_{C^0 (\partial \Omega)} \leq C,
\end{equation}
where $\xi$ is any unit tangential vector on $\partial \Omega$ and
$\nu$ is the unit inner normal of $\partial\Omega$.
It suffices to establish an upper bound for the double normal derivative
on the boundary $\partial \Omega$ since $(1 + n \gamma) \triangle u \geq 0$.

As in \cite{ITW2004}, let $T=\{T_{i}^{j}\}$ be a skew-symmetric matrix,
such that $e^{T}$ is orthogonal, where $T_{i}^j$ is the entry of $i^{th}$
row and $j^{th}$ column of $T$.
Let $\tau = (\tau_{1}, \ldots, \tau_{n})$ be a vector field in $\Omega$ given by
\[
\tau_{i} = T_{i}^{j} x_{j} + a_i,  \;\; i = 1, \ldots ,n,
\]
where $a_i$ is a constant.
Denote $u_{\tau\tau} = \tau_{i}\tau_{j}u_{ij}$ and $u_{(\tau)(\tau)}=(u_{\tau})_{\tau}=\tau_{i}\tau_{j}u_{ij}+(\tau_{i})_{j}\tau_{j}u_{i}$.
Similar to Lemma 2.1 of \cite{ITW2004} we can prove the following lemma.
\begin{lemma}
\label{jw-lem1}
We have
\[
\mathcal{L} (u_{(\tau)(\tau)}) \geq (F[U])_{(\tau)(\tau)}.
\]
\end{lemma}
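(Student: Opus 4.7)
The plan is to exploit the fact that $\tau_i = T_i^{\,j} x_j + a_i$ with $T$ skew-symmetric is a Killing vector field on $\mathbb{R}^n$: its flow $\phi_t(x) = e^{tT}x + \int_0^t e^{sT}a\,ds$ consists of rigid motions. Setting $u^t(x) := u(\phi_t(x))$, the Hessian transforms by orthogonal conjugation, $D^2 u^t(x) = R_t^{\top} (D^2 u)(\phi_t(x))\, R_t$ with $R_t = e^{tT}$, while the Laplacian is preserved. Consequently $U[u^t](x)$ is orthogonally similar to $U[u](\phi_t(x))$, their eigenvalues agree, and one obtains the invariance identity
$$F[U[u^t]](x) \;=\; F[U[u]](\phi_t(x)).$$

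Next, I would differentiate this identity twice in $t$ at $t=0$. The right-hand side gives $(F[U])_{(\tau)(\tau)}(x)$ by the standard formula for the second directional derivative of a scalar along a vector field with position-dependent coefficients. The left-hand side, by the chain rule, equals
$$F^{ij}[U]\,\ddot U_{ij}(0) \,+\, F^{ij,kl}[U]\,\dot U_{ij}(0)\,\dot U_{kl}(0),$$
where dots denote $t$-derivatives at $t=0$ and $\dot U(0)$ is symmetric. The concavity of $F$ on $\mathcal{S}^{n\times n}$ (assumption \eqref{1-3}) renders the quadratic term nonpositive, so the left-hand side is bounded above by $F^{ij}[U]\,\ddot U_{ij}(0)$.

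It then remains to identify $\ddot U_{ij}(0)$ with $U_{ij}[u_{(\tau)(\tau)}]$. Since spatial partial derivatives commute with $d/dt$, one has $\ddot U_{ij}(0) = (\ddot u^t|_0)_{ij} + \gamma\,\Delta(\ddot u^t|_0)\,\delta_{ij}$, and a direct computation using $\ddot \phi_t(x)|_0 = d\tau(x)\cdot\tau(x)$, whose $k$-th component is $T_k^{\,j}\tau_j$, yields $\ddot u^t|_0(x) = \tau_j\tau_k u_{jk} + T_k^{\,j}\tau_j u_k = u_{(\tau)(\tau)}(x)$. Therefore $F^{ij}[U]\,\ddot U_{ij}(0) = \mathcal{L}(u_{(\tau)(\tau)})$, and combining with the inequality above gives exactly $\mathcal{L}(u_{(\tau)(\tau)}) \geq (F[U])_{(\tau)(\tau)}$.

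The real substance of the argument lies in the invariance identity together with the verification that $\ddot u^t|_0 = u_{(\tau)(\tau)}$; the latter requires keeping track of the position-dependence of $\tau$ through $\ddot\phi_t|_0$, while everything else follows from the chain rule and the concavity of $F$. One could alternatively mimic the direct coordinate calculation of Lemma~2.1 in \cite{ITW2004}, but the Killing-flow viewpoint makes the role of the skew-symmetry of $T$ -- precisely what forces $R_t$ to be orthogonal, and hence the spectrum of $U$ to be preserved -- transparent from the start.
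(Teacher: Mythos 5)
Your proof is correct, and it takes a genuinely different (and cleaner) route from the paper's. The paper proves the lemma by a direct index computation: it derives the algebraic identities \eqref{4-3} and \eqref{4-4} from the skew-symmetry of $T$, substitutes into the expansion of $\mathcal{L}(u_{(\tau)(\tau)})$, and after simplification lands on
\[
\mathcal{L}(u_{(\tau)(\tau)}) = (F[U])_{(\tau)(\tau)} - F^{ij,st}\bigl(\gamma\delta_{ij}(\Delta u)_\tau + (u_\tau)_{ij}\bigr)\bigl(\gamma\delta_{st}(\Delta u)_\tau + (u_\tau)_{st}\bigr),
\]
whereupon concavity finishes the argument. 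Your flow argument reaches exactly the same quadratic remainder: the quantity $\gamma\delta_{ij}(\Delta u)_\tau + (u_\tau)_{ij}$ is precisely $\dot U_{ij}(0)$, so the paper's final display is your chain-rule computation in disguise. In effect, identities \eqref{4-3} and \eqref{4-4} of the paper are nothing but the first and second $t$-derivatives at $t=0$ of the invariance identity $F[U[u^t]](x) = F[U[u]](\phi_t(x))$, written out coordinate-by-coordinate; your presentation integrates them back into a single conceptual statement. What the flow viewpoint buys you is that the roles of each hypothesis become obvious (skew-symmetry of $T$ gives orthogonality of $R_t = e^{tT}$, hence invariance of the spectrum of $U$; concavity of $F$ kills the second-order chain-rule term) and the index bookkeeping collapses into standard calculus. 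What the paper's direct computation buys is that it never requires introducing the flow $\phi_t$ or verifying its properties, and it is the style of argument readers coming from \cite{ITW2004} will recognize. One small point worth stating explicitly if you were writing this up: the concavity step needs $\dot U(0)$ to be a \emph{symmetric} matrix, which it is since $\dot U(0) = (u_\tau)_{ij} + \gamma(\Delta u)_\tau\delta_{ij}$ (note $\Delta(u_\tau) = (\Delta u)_\tau$ because $T^k_j u_{kj} = 0$ by skew-symmetry of $T$ and symmetry of $D^2 u$).
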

\begin{proof}
Similar to Lemma 2.1 of \cite{ITW2004}, by the skew-symmetry of
$T$, we have
\begin{equation}\label{4-3}
F^{ij}(T_{i}^{k} u_{kj\tau} + T_{j}^{k} u_{ki\tau})
   = -F^{ij,st} (T_{i}^{k} u_{kj} + T_{j}^{k}u_{ki}) U_{st\tau}
\end{equation}
and
\begin{equation}\label{4-4}
\begin{aligned}
& F^{ij}(2T_{i}^{k}T_{j}^{l}u_{kl}+T_{i}^{k}T_{k}^{l}u_{lj}+T_{j}^{k}T_{k}^{l}u_{li})\\
 = & -F^{ij,st}(T_{i}^{k}u_{kj}+T_{j}^{k}u_{ki})(T_{s}^{k}u_{kt}+T_{t}^{k}u_{ks}).
\end{aligned}
\end{equation}
Note that
\[
(u_{(\tau) (\tau)})_{ij} = u_{ij (\tau)(\tau)} - 2 T^k_i u_{kj\tau}
  - 2 T^k_j u_{ki\tau} + 2 T^s_i T^t_j u_{st} + T^t_j T^s_t u_{si} + T^t_i T^s_t u_{sj}.
\]
We find
\begin{equation}\label{4-5}
\begin{aligned}
\mathcal{L} \,& (u_{(\tau)(\tau)})
 = F^{ij} u_{ij(\tau)(\tau)} + \gamma \sum F^{ii}(\Delta u)_{(\tau)(\tau)}\\
     & + F^{ij} \Big( 2 T_{i}^{s} T_{j}^{t} u_{st} + T_{j}^{t} T_{t}^{s} u_{si} +
        T_{i}^{t} T_{t}^{s} u_{sj} - 2 T_{i}^{k} u_{kj\tau} - 2 T_{j}^{k} u_{ki\tau}\Big)\\
     & + \gamma \sum F^{ii} \Big(2 T_{l}^{s} T_{l}^{t} u_{st}
        + 2 T_{l}^{t} T_{t}^{s} u_{sl} - 4 T_{l}^{k} u_{kl\tau}\Big).
\end{aligned}
\end{equation}
Next, since $T$ is skew-symmetric,
\[
2 T_{l}^{s} T_{l}^{t} u_{st} + 2 T_{l}^{t}T_{t}^{s}u_{sl} - 4 T_{l}^{k}u_{kl\tau} = 0.
\]
It follows from \eqref{4-3}, \eqref{4-4} and \eqref{4-5} that
\[
\begin{aligned}
\mathcal{L} (u_{(\tau)(\tau)})
 = \,& F^{ij} u_{ij(\tau)(\tau)} + \gamma \sum F^{ii}(\Delta u)_{(\tau)(\tau)}\\
   & - F^{ij,st} (T_{i}^{k} u_{kj} + T_{j}^{k} u_{ki}) (T_{s}^{k}u_{kt} + T_{t}^{k} u_{ks})\\
   & + F^{ij,st} \Big((T_{i}^{k} u_{kj} + T_{j}^{k} u_{ki}) U_{st\tau} + (T_{s}^{k}
       u_{kt} + T_{t}^{k }u_{ks}) U_{ij\tau}\Big).
\end{aligned}
\]
Note that
\[
(u_{\tau})_{ij}=u_{ij\tau}-T_{i}^{k}u_{kj}-T_{j}^{k}u_{ki}.
\]
We obtain
\[
\begin{aligned}
\mathcal{L} (u_{(\tau)(\tau)})
   = \,& (F[U])_{(\tau)(\tau)} - F^{ij,st}(\gamma\delta_{ij}(\Delta u)_{\tau}+(u_{\tau})_{ij})
     (\gamma\delta_{st}(\Delta u)_{\tau}+(u_{\tau})_{st})\\
   \geq \,& (F[U])_{(\tau)(\tau)}.
\end{aligned}
\]
\end{proof}

Now we establish the double normal derivative estimates. We
may assume the origin is a boundary point such that $e_{n} = (0, \ldots, 0, 1)$
is the unit inner normal there
and denote
$$M=\sup_{x\in\partial\Omega}D_{\nu\nu}u(x).$$
where $\nu$ is the unit inner normal of $\partial\Omega$ at $x\in\partial\Omega$.
Without loss of generality, we assume
\[
M=\sup_{\partial\Omega}|D^{2}u|,
\]
and
\begin{equation}\label{4-7}
\sup_{\bar{\Omega}} |D^2 u| \leq C M,
\end{equation}
for some uniform constant $C \geq 1$.
By Lemma \ref{jw-lem1}, we have
\begin{equation}\label{4-8}
\mathcal{L}(T_\alpha^2 u)\geq  T_\alpha^2 (F[U]) = T_\alpha^2 (\psi+ \varepsilon \eta (\psi)) \geq -C.
\end{equation}
According to \cite{ITW2004}, we see
\begin{equation}\label{4-9}
w(x) \equiv T_\alpha^2 u (x) - T_\alpha^2 u (0)\leq C_{0}(|x'|^{2} + M |x'|^{4}) \equiv h(x')
\end{equation}
for $x\in\partial\Omega$ with $|x'|\leq r_{0}$.

Let
\[
\overline{\Psi} = \frac{1}{\delta^4} \Big(A_1 (u - \ul u) + t d - \frac{N}{2} d^2
    + A_3 |x|^4\Big) - A_2 (u - \ul u) - \sum_{l < n} |\nabla_l (u - \varphi)|^2.
\]
Using the same arguments to Theorem \ref{barrier}, by \eqref{4-8}, \eqref{4-7} and \eqref{4-9},
we can show that
there exists positive constants $A_1$, $A_2$, $A_3$, $N$ sufficiently large and $t$, $\delta$
sufficiently small such that
\[
\mathcal{L}\Big(w - h(x') - M \overline{\Psi} \Big)\geq 0 \;\;
    \mbox{in} ~ \Omega_{\delta}
\]
and
\[
w - h(x') - M \overline{\Psi} \leq 0 \;\; \mathrm{on} ~ \partial\Omega_{\delta}.
\]
It follows from the maximum principle that
\begin{equation}\label{4-10}
w - h(x') - M \overline{\Psi} \leq 0 \;\; \mathrm{on} ~ \bar{\Omega}_{\delta}.
\end{equation}
It follows that
\begin{equation}\label{4-11}
w \leq C M (u - \underline{u} + d) + C M |x|^4 + C  \;\; \mathrm{on} ~ \bar{\Omega}_{\delta}.
\end{equation}
Therefore, for each small $\sigma > 0$, we can find a positive constant
$\delta_1^4 = C \sigma < \delta^4$ such that
\begin{equation}\label{4-11-0}
w \leq C M (u - \underline{u} + d) +\frac{\sigma}{2} M + C  \;\; \mathrm{on} ~ \bar{\Omega}_{\delta_1}
\end{equation}
and
\begin{equation}\label{4-12}
\mathcal{L} h \leq (\sqrt{\sigma} M + C) \sum F^{ii}  \;\; \mathrm{on} ~ \bar{\Omega}_{\delta_1}.
\end{equation}
Next, there exists a positive constant $\delta_2 < \delta_1$
such that
\[
C (u - \underline u + d) \leq \frac{\sigma}{2} \;\;\;\; \mathrm{on} ~ \Omega - \widehat{\Omega}_{\delta_2},
\]
where $\widehat{\Omega}_{\delta_2} \equiv \{x \in \Omega: \mathrm{dist} (x, \partial \Omega) > \delta_2\}$,
since $|D (\underline u - u)| \leq C$ independent of $\varepsilon$.
Hence we can derive from \eqref{4-11-0} that
\begin{equation}\label{4-13}
w \leq \sigma M + C  \;\;\;  \mathrm{on} \; \bar{\Omega}_{\delta_1} \cap (\Omega - \widehat{\Omega}_{\delta_2}).
\end{equation}
On the other hand, by \eqref{3-1}, there exists a positive constant $C$ depending on
$\gamma^{-1}$, $\delta_2$ and $|u|_{C^1 (\bar{\Omega})}$ such that
\begin{equation}\label{4-14}
|w| \leq C \;\;\;\; \mathrm{ in } ~ \widehat{\Omega}_{\delta_2}.
\end{equation}
Thus, there exists a positive constant $C_\sigma$ depending on $\sigma$ and other known data such that
\[
|w| \leq \sigma M + C_\sigma \;\;\;  \mathrm{on} \; \bar{\Omega}_{\delta_1}.
\]
Similar to Theorem \ref{barrier}, we can find positive constants
$A_1$, $A_2$, $A_3$, $t$ and $N$ such that
\[
\mathcal{L} (w - (\sigma M + C_\sigma) \Psi') \geq 0 \mbox{  in } \Omega_{\delta_1}
\]
and
\[
w - (\sigma M + C_\sigma) \Psi' \leq 0 \mbox{  on } \partial \Omega_{\delta_1} \cap \Omega,
\]
where
\[
\Psi' = \frac{1}{\delta_1^2} \Big(A_1 (u - \ul u) + t d - \frac{N}{2} d^2
    + A_3 |x|^2\Big) - A_2 (u - \ul u) - \sum_{l < n} |T_l (u - \varphi)|^2.
\]
Note that that main terms to control $\sum F^{ii}$ in Theorem \ref{barrier} are
$u - \ul u$ and $\frac{N}{2} d^2$. We may assume that $\sigma$ is sufficiently small.
Reviewing the proof of Theorem \ref{barrier},
we can find constants $t'$ sufficiently small and $N'$ sufficiently large
such that
\begin{equation}
\label{bd-9}
\mathcal{L} (u - \ul u + t'd - \frac{N'}{2} d^2) \leq - \varepsilon_1 \sum F^{ii}
\mbox{  in } \Omega_{\delta_1}
\end{equation}
for some positive constant $\varepsilon_1$ and
\begin{equation}
\label{bd-10}
u - \ul u + t'd - \frac{N'}{2} d^2 \geq 0
\mbox{  on } \bar{\Omega}_{\delta_1}.
\end{equation}
By \eqref{4-12}, \eqref{bd-9} and \eqref{bd-10},
we can choose a constant $A$ sufficiently large such that
\[
\mathcal{L} (w - (\sigma M + C_\sigma) \Psi' - A (\sqrt{\sigma} M + C) w - h) \geq 0 \mbox{  in } \Omega_{\delta_1},
\]
and
\[
w - (\sigma M + C_\sigma) \Psi' - A (\sqrt{\sigma} M + C) w - h \leq 0  \mbox{  on } \partial \Omega_{\delta_1},
\]
where $w = u - \ul u + t' d - \frac{N'}{2} d^2$.
Thus, by the maximum principle again, we have
\[
w \leq (C \sqrt{\sigma} M + C_\sigma)
  \big(u - \underline{u} + d + |x|^{2}\big) + h (x') \;\;  \mbox{ on } \bar{\Omega}_{\delta_1}.
\]
Therefore we obtain
\begin{equation}\label{4-15}
(T_\alpha^2 u)_{n}(0)\leq C\sqrt{\sigma} M+C_\sigma \mbox{  for each } \alpha < n.
\end{equation}
It follows that
\[
u_{n(\xi)(\xi)}\leq C\sqrt{\sigma} M+C_\sigma \;\; \mathrm{on} ~ \partial\Omega
\]
for any tangential unit vector field $\xi$ on $\partial\Omega$.

Now choose a new coordinate system and suppose the maximum $M$ is attained at the origin $0\in\partial\Omega$,
and near the origin $\partial\Omega$ is given by (\ref{4-6}). By the Taylor expansion, we have
\[
u_{n}(x)\leq u_{n}(0)+\sum_{\alpha<n}u_{n\alpha}(0)x_{\alpha}+ (C\sqrt{\sigma} M+C_\sigma) |x'|^{2}
\]
for $x \in \partial \Omega$ near the origin, where $u_{n\alpha}(0)$ is bounded by (\ref{jw-2}). Denote
\[
g \equiv u_{n}(x)-u_{n}(0)-\sum_{\alpha<n}u_{n\alpha}(0)x_{\alpha}- (C\sqrt{\sigma} M+C_\sigma) |x'|^{2}.
\]
In \eqref{bd-0}, we may choose another group of positive constants $A_1$, $A_2$, $A_3$, $t$,
$N$ and $\delta$ such that
\[
\mathcal{L}\Big(g - (\sqrt{\sigma} M+C_\sigma) \Psi \Big)\geq 0 \;\; \mathrm{in} ~ \Omega_{\delta}
\]
and
\[
g - (\sqrt{\sigma} M+C_\sigma) \Psi \leq 0 \;\; \mathrm{on} ~ \partial\Omega_{\delta}.
\]
Applying the maximum principle again we obtain
\[
M=u_{nn}(0)\leq C \sqrt{\sigma} M+C_\sigma.
\]
Choosing $\sqrt{\sigma}<1/2C$, we get a bound $M\leq C$ and \eqref{1-7} is proved.
\begin{remark}
We remark that in this paper, the condition that $\gamma > 0$ is only used to establish the interior
estimate \eqref{3-1}.
\end{remark}

\bigskip

\small

\end{document}